\documentclass{amsart}[10pt]

\usepackage{amsrefs}
\usepackage[all]{xy}
\usepackage{syntonly}
\usepackage{hyperref}
\usepackage{amsfonts}
\usepackage{amssymb}
\usepackage{amsmath}
\usepackage{amsthm}
\usepackage[inline]{enumitem}
\usepackage{tikz}
\usepackage{graphics}
\usepackage{graphicx}
\usepackage{color}
\usepackage{comment}
\usepackage{caption,lipsum}
\usepackage{framed}

\usepackage{lineno}

\usepackage{multirow}

\usepackage{footmisc}

\newtheorem{theorem}{Theorem}[section]
\newtheorem{corollary}[theorem]{Corollary}

\newtheorem{lemma}[theorem]{Lemma}

  %for claims that are part of a larger theorem, but which don't hold absolutely

\newtheorem{question}[theorem]{Question}

\newtheorem{fact}[theorem]{Fact}

\newtheorem{remark}[theorem]{Remark}

%\linenumbers
%\pagewiselinenumbers

\specialcomment{com}{ \color{red} \textbf{COMMENT:} }{  \color{black}} 
\specialcomment{oldcom}{ \color{brown} }{\color{black}} 

\excludecomment{oldcom} 

%\excludecomment{com} 

\usepackage{stmaryrd}

\usepackage[margin=1in]{geometry}

\interfootnotelinepenalty=10000

\begin{document}
\title[]{Salce's problem on cotorsion pairs is undecidable}

\author{Sean Cox}
\email{scox9@vcu.edu}
\address{
Department of Mathematics and Applied Mathematics \\
Virginia Commonwealth University \\
1015 Floyd Avenue \\
Richmond, Virginia 23284, USA 
}

\keywords{cotorsion pair, cotorsion theory, deconstructible, precovering, stationary logic, elementary submodel}

\subjclass[2010]{ 03E75, 18G25, 16E30,16D40, 16D90, 16B70}

\thanks{Many thanks to Jan Trlifaj for helpful correspondence regarding cotorsion pairs and deconstructibility.}

\begin{abstract}
Salce~\cite{MR565595} introduced the notion of a \emph{cotorsion pair} of classes of abelian groups, and asked whether every such pair is \emph{complete} (i.e., has enough injectives and projectives); we refer to this as \textbf{Salce's Problem} (for \textbf{Ab}).  We prove that it is  consistent, relative to the consistency of Vop\v{e}nka's Principle (VP), that the answer is affirmative.  Combined with a previous result of Eklof-Shelah~\cite{MR2031314}, this shows that Salce's Problem for \textbf{Ab}, and in fact for \textbf{R-Mod} when $R$ is hereditary, is independent of the ZFC axioms (modulo the consistency of VP).  
\end{abstract}

\maketitle

%\tableofcontents

\section{Introduction}\label{sec_Intro}

\emph{Cotorsion pairs}, also called \emph{cotorsion theories}, were introduced by Salce~\cite{MR565595} in the setting of abelian groups.  For a class $\mathcal{C}$ of $R$-modules,
\[
{}^\perp \mathcal{C}=_{\text{def}} \{ X \ : \ \text{Ext}^1_R(X,C) = 0 \text{ for all } C \in \mathcal{C}  \}
\]
and
\[
\mathcal{C}^\perp=_{\text{def}} \{ Y \ : \ \text{Ext}^1_R(C,Y) = 0 \text{ for all } C \in \mathcal{C} \}.
\]
A \textbf{cotorsion pair} is a pair $(\mathcal{A},\mathcal{B})$ of classes such that $\mathcal{A} = {}^\perp \mathcal{B}$ and $\mathcal{A}^\perp =\mathcal{B}$.  The cotorsion pair $(\mathcal{A},\mathcal{B})$ is called \textbf{complete} if, for every module $M$, there exists a short exact sequence
\[
\xymatrix{
0 \ar[r] & B \ar[r] & A \ar[r] & M \ar[r] & 0
}
\]
for some $A \in \mathcal{A}$ and $B \in \mathcal{B}$.\footnote{We also say the cotorsion pair \emph{has enough projectives} if this property holds.  Salce proved this is equivalent to the pair \emph{having enough injectives}; i.e., that for every module $M$, there is a short exact sequence $0 \to M \to B \to A \to 0$ with $A \in \mathcal{A}$ and $B \in \mathcal{B}$. }  Cotorsion pairs that are complete provide for a nice approximation theory.  The pair 
\begin{equation*}
\big( \text{Projective Modules}, \text{All Modules} \big)
\end{equation*}
 is the simplest example of a complete cotorsion pair.

Salce's original paper dealt exclusively with the category \textbf{Ab} of abelian groups (i.e., $\mathbb{Z}$-modules), and he asked whether every cotorsion pair in this category is complete (Problem 2 of \cite{MR565595}).  We shall refer to this as \textbf{Salce's Problem}, or sometimes as \textbf{Salce's Problem for Ab}, to distinguish it from the still-open generalized version (see Question \ref{q_GeneralSalceProblem}).  Our main result is:
\begin{theorem}\label{thm_MainVP_result}
It is consistent, relative to the consistency of Vop\v{e}nka's Principle (VP), that the answer to Salce's Problem is affirmative.  

In fact, it is consistent relative to VP that for any ring $R$ and any cotorsion pair $(\mathcal{A},\mathcal{B})$ of $R$-modules, if ${}^\perp B$ is downward closed under elementary submodules for all $B \in \mathcal{B}$,\footnote{This always holds if the ring is hereditary; indeed, if $R$ is hereditary and $X$ is any $R$-module, then ${}^\perp X$ is downward closed under all submodules.} then $(\mathcal{A},\mathcal{B})$ is both cogenerated and generated by a set, and hence\footnote{By Eklof-Trlifaj~\cite{MR1798574}, see below.} complete.  
\end{theorem}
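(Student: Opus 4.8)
The plan is to work in $\mathrm{ZFC}+\mathrm{VP}$ throughout and prove the second (``in fact'') assertion outright; since we are given $\mathrm{Con}(\mathrm{VP})$, this yields the stated relative consistency, and it yields the first assertion because $\mathbb{Z}$ — like every hereditary ring — satisfies the hypothesis vacuously by the footnote, so \emph{every} cotorsion pair of abelian groups is covered. By Eklof--Trlifaj a cotorsion pair generated by a set is complete, so it is enough to prove that any cotorsion pair $(\mathcal{A},\mathcal{B})$ with ${}^{\perp}B$ downward closed under elementary submodels for every $B\in\mathcal{B}$ is both generated and cogenerated by a set. I would first record the structural facts to lean on: regarding each module as a first-order structure for the language $L_R=\{0,+,-\}\cup\{\mu_r:r\in R\}$ of (left) $R$-modules, the class $\mathcal{A}={}^{\perp}\mathcal{B}=\bigcap_{B\in\mathcal{B}}{}^{\perp}B$ is downward closed under $L_R$-elementary submodels (an intersection of such classes), and, being the left-hand class of a cotorsion pair, $\mathcal{A}$ is closed under transfinite extensions, direct summands and coproducts and contains the projectives. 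I would also use two standard translations: ``$(\mathcal{A},\mathcal{B})$ is generated by a set'' is equivalent to the existence of a cardinal $\kappa$ with $(\mathcal{A}^{<\kappa})^{\perp}=\mathcal{B}$ (where $\mathcal{A}^{<\kappa}$ is the class of members of $\mathcal{A}$ of size $<\kappa$), and ``$(\mathcal{A},\mathcal{B})$ is cogenerated by a set'' is equivalent to $\mathcal{A}={}^{\perp}\mathcal{S}$ for \emph{some} set $\mathcal{S}$ of modules — such an $\mathcal{S}$ then automatically lies in $\mathcal{A}^{\perp}=\mathcal{B}$.

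The heart of the matter is to produce, from $\mathrm{VP}$, a cardinal $\kappa$ with $(\mathcal{A}^{<\kappa})^{\perp}=\mathcal{B}$; equivalently (via the Hill lemma and the standard Eklof--Trlifaj analysis of set-generated pairs), that $\mathcal{A}$ is deconstructible, $\mathcal{A}=\mathrm{Filt}(\mathcal{A}^{<\kappa})$. What is really being asserted is a compactness phenomenon for $\mathrm{Ext}$: if a module $Y$ satisfies $\mathrm{Ext}^1_R(A,Y)=0$ for every $A\in\mathcal{A}$ of size $<\kappa$, then $\mathrm{Ext}^1_R(A,Y)=0$ for \emph{all} $A\in\mathcal{A}$. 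This can fail in $\mathrm{ZFC}$: closure under elementary submodels does not give closure under quotients — not even quotients of one elementary submodel by another — so filtering a given $A\in\mathcal{A}$ by a continuous $\in$-chain of elementary submodels $N_i\prec H_\theta$ and passing to the modules $A\cap N_i\in\mathcal{A}$ (of size $<\kappa$) need not yield a genuine $\mathcal{A}^{<\kappa}$-filtration — and this gap is precisely where the Eklof--Shelah models defeat Salce's Problem. To have $\mathrm{VP}$ close it I would: (i) isolate the reflection statement ``for club-many (countable, or $<\kappa$) elementary submodels $N$ of a large $H_\theta$, the restriction to $A\cap N$ of every relevant extension of $A$ by a member of $\mathcal{B}$ already splits in the way needed to assemble a splitting over $A$'' and show it is expressible, over $L_R$-structures, as a sentence of an infinitary stationary logic $\mathcal{L}_{\infty\omega}(aa)$; (ii) invoke the consequence of $\mathrm{VP}$ that a class of $L_R$-structures which is closed under elementary submodels and carved out by such a sentence is \emph{bounded}, i.e.\ determined below some cardinal $\kappa$; (iii) feed the resulting $\kappa$ back into a Skolem--Löwenheim construction, using closure of $\mathcal{A}$ under transfinite extensions and the Hill lemma to convert ``reflection below $\kappa$'' into the honest filtration $\mathcal{A}=\mathrm{Filt}(\mathcal{A}^{<\kappa})$. (A purely categorical packaging of the $\mathrm{VP}$ input — ``every small-orthogonality class\dots'' — is tempting, but the closure properties of $\mathcal{A}$ available to us are weaker than accessibility, so the model-theoretic route through elementary submodels and stationary logic appears to be the right one.) With this $\kappa$ in hand, $(\mathcal{A}^{<\kappa})^{\perp}=\mathrm{Filt}(\mathcal{A}^{<\kappa})^{\perp}=\mathcal{A}^{\perp}=\mathcal{B}$ by Eklof's lemma, so $(\mathcal{A},\mathcal{B})$ is generated by the set $\mathcal{A}^{<\kappa}$, hence complete by Eklof--Trlifaj.

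For cogeneration by a set I would run the same $\mathrm{VP}$-bounding principle on the other side: $\mathcal{A}$, as a \emph{left} $\mathrm{Ext}$-orthogonality class closed under elementary submodels, should likewise be determined by a set of right-hand test modules — concretely, using the completeness just established to form special $\mathcal{B}$-preenvelopes of all sufficiently small modules, and then bounding via the same reflection-in-stationary-logic argument, to get a set $\mathcal{T}\subseteq\mathcal{B}$ with ${}^{\perp}\mathcal{T}=\mathcal{A}$. That is cogeneration by a set.

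The main obstacle — where essentially all the work lies — is the middle step: finding the right ``$\mathcal{L}(aa)$-describable shadow'' of ``$\mathcal{A}$ is deconstructible'' (equivalently, of the $\mathrm{Ext}$-compactness statement), so that $\mathrm{VP}$ can actually bound it, and then reconciling the set-theoretic reflection (clubs of elementary submodels, stationary logic) with the module-theoretic filtration machinery (Eklof's lemma, the Hill lemma, transfinite extensions) so that the small elementary submodels $A\cap N$ really do organize into a bona fide $\mathcal{A}^{<\kappa}$-filtration of $A$. Getting those two languages to line up is the crux; by comparison, the reductions of the first paragraph and the closing appeals to Eklof's lemma and to Eklof--Trlifaj are routine, and the cogeneration step is a variant of the same idea.
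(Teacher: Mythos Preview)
Your proposal has a genuine gap at exactly the point you flag as ``the main obstacle.'' You try to prove the conclusion outright from $\mathrm{ZFC}+\mathrm{VP}$, i.e., that VP \emph{alone} implies every such cotorsion pair is generated by a set. The paper does \emph{not} do this, and it is not clear that it can be done. Your step~(iii)---turning a VP-style reflection below $\kappa$ into an honest $\mathcal{A}^{<\kappa}$-filtration of an arbitrary $A\in\mathcal{A}$---is precisely where closure under elementary submodels fails to help, as you yourself note: the successive quotients $(A\cap N_{i+1})/(A\cap N_i)$ need not lie in $\mathcal{A}$. Your invocation of ``a consequence of VP that a class \dots\ carved out by such a sentence is bounded'' is left unspecified, and no known VP principle is cited that would close this gap. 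The stationary-logic packaging you propose is suggestive but not a proof.

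The paper's route is structurally different and avoids this difficulty entirely. First, it passes to a forcing extension (using Brooke-Taylor's VP-preservation theorem) in which VP still holds \emph{and} $\Diamond_\lambda(S)$ holds for all regular uncountable $\lambda$ and all stationary $S\subseteq\lambda$; this is why the theorem is stated as a relative consistency result rather than an outright implication from VP. Second---and this is the order opposite to yours---VP is used to obtain \emph{cogeneration} by a set: a direct reflection argument on the class $\mathcal{B}$ (not $\mathcal{A}$) shows there is a set $\mathcal{B}_0\subseteq\mathcal{B}$ with ${}^\perp\mathcal{B}={}^\perp\mathcal{B}_0$. Third, \emph{generation} by a set is obtained not from VP but from the diamond hypothesis, via the \v{S}aroch--Trlifaj theorem (which takes as input a cotorsion pair already cogenerated by a set, with left class closed under elementary submodules, and outputs generation by a set). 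Completeness then follows from Eklof--Trlifaj. So the division of labor is: VP $\Rightarrow$ cogenerated by a set; diamond $\Rightarrow$ generated by a set. Your proposal tries to make VP do both jobs, and the second one is not substantiated.
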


\noindent VP is a well-studied principle that is equivalent to the assertion (scheme) that ``every subfunctor of an accessible functor is accessible" (see \cite{MR1294136} for many other characterizations).  The history of VP is amusing; the historical remarks in Chapter 6 of Ad\'{a}mek-Rosick\'{y}~\cite{MR1294136} refer to VP as ``a practical joke which misfired".  VP fits in the large cardinal hierarchy just below the huge cardinals.

Eklof-Shelah~\cite{MR2031314} proved that it is consistent, relative to the consistency of the Zermelo-Fraenkel axioms of mathematics (ZFC), that the answer to Salce's Problem is negative.  Together with Theorem \ref{thm_MainVP_result} this yields:
\begin{corollary}
Salce's Problem (for \textbf{Ab}, and more generally for \textbf{R-Mod} with $R$ hereditary) is independent of the ZFC axioms (assuming that ZFC + VP is consistent).
\end{corollary}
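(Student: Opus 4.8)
The plan is to derive the corollary directly from Theorem~\ref{thm_MainVP_result} together with the Eklof--Shelah result, without any new mathematics; the corollary is essentially a bookkeeping statement about consistency strengths. First I would recall the meaning of ``independent of ZFC'': a statement $\varphi$ is independent of ZFC if neither $\varphi$ nor $\neg\varphi$ is provable from ZFC. By the completeness theorem, it suffices to produce (under a suitable consistency hypothesis) a model of $\mathrm{ZFC}+\varphi$ and a model of $\mathrm{ZFC}+\neg\varphi$. Here $\varphi$ is the assertion ``every cotorsion pair in \textbf{Ab} is complete'' (resp.\ the same for \textbf{R-Mod} with $R$ hereditary).

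For the affirmative side I would invoke Theorem~\ref{thm_MainVP_result}: working under the hypothesis that $\mathrm{ZFC}+\mathrm{VP}$ is consistent, that theorem yields a model of $\mathrm{ZFC}$ in which every cotorsion pair of $R$-modules with ${}^\perp B$ downward closed under elementary submodules (for all $B\in\mathcal B$) is complete. Since, as noted in the footnote to Theorem~\ref{thm_MainVP_result}, this downward-closure hypothesis holds automatically whenever $R$ is hereditary (and $\mathbb{Z}$ is hereditary, so \textbf{Ab} is covered), this model satisfies $\varphi$. For the negative side I would cite Eklof--Shelah~\cite{MR2031314}: relative only to the consistency of $\mathrm{ZFC}$ — and a fortiori relative to the consistency of $\mathrm{ZFC}+\mathrm{VP}$, which is a stronger hypothesis — there is a model of $\mathrm{ZFC}$ in which some cotorsion pair in \textbf{Ab} is not complete, i.e.\ a model of $\neg\varphi$. (Their example is of a cotorsion pair of abelian groups, hence also exhibits the failure for hereditary \textbf{R-Mod} by taking $R=\mathbb Z$.) Both models being available under the single hypothesis $\mathrm{Con}(\mathrm{ZFC}+\mathrm{VP})$, independence follows.

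The only point requiring a word of care — and the ``main obstacle,'' such as it is — is matching the consistency hypotheses: Theorem~\ref{thm_MainVP_result} needs $\mathrm{Con}(\mathrm{ZFC}+\mathrm{VP})$, whereas Eklof--Shelah needs only $\mathrm{Con}(\mathrm{ZFC})$; one should observe that the former implies the latter, so the uniform hypothesis ``$\mathrm{ZFC}+\mathrm{VP}$ is consistent'' suffices to run both halves of the argument, which is exactly what the parenthetical in the corollary asserts. No genuine difficulty arises; the proof is a two-line deduction once the relevant theorems are in hand.
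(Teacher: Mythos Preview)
Your proposal is correct and matches the paper's approach exactly: the corollary is stated immediately after the sentence ``Together with Theorem~\ref{thm_MainVP_result} this yields,'' and your write-up simply unpacks that sentence---combining the affirmative model from Theorem~\ref{thm_MainVP_result} with the negative model from Eklof--Shelah, and noting that $\mathrm{Con}(\mathrm{ZFC}+\mathrm{VP})$ suffices for both halves.
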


There are three main ingredients to the proof of Theorem \ref{thm_MainVP_result}.  First, if VP is consistent, then VP is also consistent with the statement 
\begin{equation}\label{eq_DiamondEverywhereIntro} 
\Diamond_\lambda(S) \text{ holds for all regular uncountable } \lambda \text{ and all stationary } S \subseteq \lambda,
\tag{*}
\end{equation} 
where $\Diamond_\lambda(S)$ is the well-known combinatorial principle isolated by Ronald Jensen (see the Appendix).  Because of a nice ``VP-preservation" theorem of Brooke-Taylor~\cite{MR2805294}, this part of the argument is a standard forcing construction, and is relegated to the Appendix.

The second key ingredient of Theorem \ref{thm_MainVP_result} is the following theorem, which employs VP and (set-theoretic) ``elementary submodel" arguments:

\begin{theorem}\label{thm_VP_Main}
Vop\v{e}nka's Principle implies that for every ring $R$ and every class $\mathcal{B}$ of $R$-modules:  if ${}^\perp B$ is downward closed under elementary submodules for all $B \in \mathcal{B}$, then there is a \textbf{set} $\mathcal{B}_0 \subseteq \mathcal{B}$ such that
\[
{}^\perp \mathcal{B} = {}^\perp \mathcal{B}_0.
\]
\end{theorem}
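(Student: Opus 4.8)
The plan is to argue by contradiction with the help of VP. Suppose that no set $\mathcal{B}_0 \subseteq \mathcal{B}$ satisfies ${}^\perp\mathcal{B} = {}^\perp\mathcal{B}_0$. Since ${}^\perp\mathcal{B} \subseteq {}^\perp\mathcal{B}_0$ holds automatically, the assumption says: for every \emph{set} $\mathcal{B}_0 \subseteq \mathcal{B}$ there is an $X \in {}^\perp\mathcal{B}_0 \setminus {}^\perp\mathcal{B}$, and then, as $X \notin {}^\perp\mathcal{B}$, some $B \in \mathcal{B}$ with $\text{Ext}^1_R(X,B) \neq 0$. Using this repeatedly (with global choice, or by always taking least-rank witnesses), I would build by transfinite recursion a sequence $\big\langle (X_\alpha, B_\alpha) : \alpha \in \mathrm{Ord}\big\rangle$ with $B_\alpha \in \mathcal{B}$ such that, writing $\mathcal{B}_{<\alpha} := \{B_\gamma : \gamma < \alpha\}$, one has $X_\alpha \in {}^\perp\mathcal{B}_{<\alpha}$ and $\text{Ext}^1_R(X_\alpha, B_\alpha) \neq 0$. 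The only features I shall use are: \emph{(i)} $\text{Ext}^1_R(X_\alpha, B_\alpha) \neq 0$ for every $\alpha$; and \emph{(ii)} $X_\alpha \in {}^\perp B_\gamma$ whenever $\gamma < \alpha$.

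Next I would encode the stages into set-theoretic structures so that VP applies. Fix the first-order language with a binary symbol interpreted by $\in$, together with constant symbols naming $R$ and each element of $R$, a constant $\dot X$, and a constant $\dot c$; for each $\alpha$ pick a cardinal $\theta_\alpha$, with $|H_{\theta_\alpha}|$ strictly increasing in $\alpha$ and with $R, X_\alpha, B_\alpha, \alpha \in H_{\theta_\alpha}$, and let $\mathcal{H}_\alpha := (H_{\theta_\alpha}; \in, R, (r)_{r\in R}, X_\alpha, \alpha)$, where $\dot X$ names $X_\alpha$ together with its $R$-module structure. The $\mathcal{H}_\alpha$ are pairwise non-isomorphic, so $\{\mathcal{H}_\alpha : \alpha \in \mathrm{Ord}\}$ is a proper class of structures in a fixed (set-sized) language, and VP yields distinct ordinals $\alpha \neq \beta$ and an \emph{elementary} embedding $j \colon \mathcal{H}_\alpha \to \mathcal{H}_\beta$. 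Because $j$ preserves the constant $\dot c$ we get $j(\alpha) = \beta$; because $j$ is an $\in$-elementary embedding between transitive sets it is strictly increasing on the ordinals, so $\beta = j(\alpha) \geq \alpha$, hence $\alpha < \beta$. Write $\mu := \alpha$ and $\nu := \beta$.

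Now I would extract a module-theoretic statement from $j$. Since $j$ fixes $R$ and every scalar $r \in R$, and $\mathcal{H}_\mu$ expresses that $\dot X$ is an $R$-module, the restriction $j\restriction X_\mu$ is an injective $R$-module homomorphism of $X_\mu$ onto a submodule of $j(\dot X^{\mathcal{H}_\mu}) = \dot X^{\mathcal{H}_\nu} = X_\nu$; transferring formulas of the module language through $j$ (and using absoluteness of satisfaction for the transitive structures $H_{\theta_\mu}, H_{\theta_\nu}$) shows that $j[X_\mu]$ is in fact an \emph{elementary} submodel of $X_\nu$, so $X_\mu$ is isomorphic to an elementary submodel of $X_\nu$. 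Since $\mu < \nu$, feature \emph{(ii)} at stage $\nu$ gives $X_\nu \in {}^\perp B_\mu$; as $B_\mu \in \mathcal{B}$ and, by hypothesis, ${}^\perp B_\mu$ is downward closed under elementary submodels (and obviously closed under isomorphism), we conclude $X_\mu \in {}^\perp B_\mu$, i.e.\ $\text{Ext}^1_R(X_\mu, B_\mu) = 0$ --- contradicting feature \emph{(i)} at stage $\mu$, which establishes the theorem.

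I expect the two points requiring real care to be: (a) the bookkeeping that pins down the \emph{direction} of the embedding produced by VP --- the constant $\dot c$ recording the ordinal stage, so that the target $\mathcal{H}_\nu$ ``knows'' $X_\nu \in {}^\perp B_\mu$ while the source $\mathcal{H}_\mu$ ``knows'' $\text{Ext}^1_R(X_\mu, B_\mu) \neq 0$; and (b) passing from the $\in$-elementary embedding $j$ to a genuine elementary embedding of $R$-modules, which is why the ring and all its elements are named by constants (so that $j$ commutes with every scalar operation) rather than treated externally. By contrast, the recursion itself and the $\text{Ext}$-bookkeeping are used purely externally --- the structures $\mathcal{H}_\alpha$ are never asked to compute $\text{Ext}^1_R$ --- so no absoluteness lemma for Ext is needed, and the appeal to VP is the standard one producing an elementary embedding between two members of a proper class of structures.
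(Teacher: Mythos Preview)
Your proof is correct and takes a genuinely different route from the paper's.  The paper does not argue by contradiction: it invokes a black-box reflection lemma (Corollary~A.2 of \cite{Cox_MaxDecon}) to the effect that, under VP, for any class relation $P$ there is a $\kappa$ such that every tuple can be reflected into $H_\kappa$ via some small $\mathfrak{N}\prec_{\Sigma_1}(V,\in)$ preserving membership in $P$.  Applying this with $P=\{(M,B): B\in\mathcal{B},\ M\notin{}^\perp B\}$, the paper picks such a $\kappa>|R|$ and shows \emph{directly} that $\mathcal{B}_0:=\{B\in\mathcal{B}:|B|<\kappa\}$ works: given $M\notin{}^\perp\mathcal{B}$ witnessed by $B$, reflect $(M,B)$ to $(M_{\mathfrak{N}},B_{\mathfrak{N}})\in P$; then $B_{\mathfrak{N}}\in\mathcal{B}_0$, $M_{\mathfrak{N}}\notin{}^\perp B_{\mathfrak{N}}$, and since $M_{\mathfrak{N}}$ is an elementary submodule of $M$, downward closure of ${}^\perp B_{\mathfrak{N}}$ forces $M\notin{}^\perp B_{\mathfrak{N}}$.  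You instead build a transfinite bad sequence and apply VP in its bare ``elementary embedding between two members of a proper class'' form, with the ordinal constant $\dot c$ pinning down the direction.  Your argument is more self-contained (no external reflection lemma) and uses VP in its most familiar guise; the paper's argument is constructive (it names $\mathcal{B}_0$ explicitly) and plugs into a reflection framework that the paper reuses to discuss generalizations to non-concrete categories.  The algebraic core is identical in both: manufacture an elementary $R$-module embedding (your $j\restriction X_\mu:X_\mu\to X_\nu$, the paper's $\sigma_{\mathfrak{N}}\restriction M_{\mathfrak{N}}:M_{\mathfrak{N}}\to M$) and feed it to the downward-closure hypothesis on ${}^\perp B$.
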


Rephrasing Theorem \ref{thm_VP_Main} in terms of cotorsion pairs yields:
\begin{corollary}\label{cor_VP_cogen}
Vop\v{e}nka's Principle implies that if $(\mathcal{A},\mathcal{B})$ is a cotorsion pair, and ${}^\perp B$ is downward closed under elementary submodules for all $B \in \mathcal{B}$, then $(\mathcal{A},\mathcal{B})$ is cogenerated by a set.\footnote{Using the terminology of G\"obel-Trlifaj~\cite{MR2985554}; ``generated by a set" in the terminology of Eklof-Trlifaj~\cite{MR1798574} and Salce~\cite{MR565595}.  See Section \ref{sec_Prelims} for details.}
\end{corollary}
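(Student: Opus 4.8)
The plan is to observe that Corollary~\ref{cor_VP_cogen} is nothing more than a restatement of Theorem~\ref{thm_VP_Main} in the language of cotorsion pairs, so the only work is to unwind the relevant definitions. Recall that, in the terminology of G\"obel-Trlifaj, a cotorsion pair $(\mathcal{A},\mathcal{B})$ is \emph{cogenerated by a set} if there is a \textbf{set} $\mathcal{S}$ of modules with $\mathcal{A} = {}^\perp \mathcal{S}$; using that $(\mathcal{A},\mathcal{B})$ is a cotorsion pair (so $\mathcal{A} = {}^\perp \mathcal{B}$ and $\mathcal{B} = \mathcal{A}^\perp$), this is equivalent to requiring ${}^\perp \mathcal{B} = {}^\perp \mathcal{S}$ for some set $\mathcal{S}$.

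First I would fix a cotorsion pair $(\mathcal{A},\mathcal{B})$ such that ${}^\perp B$ is downward closed under elementary submodels for every $B \in \mathcal{B}$. This is exactly the hypothesis of Theorem~\ref{thm_VP_Main} applied to the class $\mathcal{B}$, so, under VP, that theorem hands us a set $\mathcal{B}_0 \subseteq \mathcal{B}$ with ${}^\perp \mathcal{B} = {}^\perp \mathcal{B}_0$. Since $\mathcal{A} = {}^\perp \mathcal{B}$ by the cotorsion pair axioms, we conclude $\mathcal{A} = {}^\perp \mathcal{B}_0$ with $\mathcal{B}_0$ a set; that is, $(\mathcal{A},\mathcal{B})$ is cogenerated by the set $\mathcal{B}_0$ (and, as a bonus, the cogenerating set can be taken inside $\mathcal{B}$ itself). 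I would present this as a two-line deduction.

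The only thing to be careful about — and it is bookkeeping rather than a real obstacle — is the terminology: one must check that "cogenerated by a set" in the sense of G\"obel-Trlifaj is the condition $\mathcal{A} = {}^\perp \mathcal{S}$ and not the dual "generated by a set" condition $\mathcal{B} = \mathcal{S}^\perp$, and that the phrase "${}^\perp B$ is downward closed under elementary submodels for all $B \in \mathcal{B}$" is being read the same way in the corollary as in Theorem~\ref{thm_VP_Main}. The footnote attached to the corollary is there precisely to flag this point. All the genuine mathematical content resides in Theorem~\ref{thm_VP_Main}, which the corollary merely repackages.
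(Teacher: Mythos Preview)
Your proposal is correct and matches the paper's approach: the paper introduces the corollary with the phrase ``Rephrasing Theorem~\ref{thm_VP_Main} in terms of cotorsion pairs yields'' and gives no separate proof, treating it exactly as the immediate definitional unwinding you describe. Your care with the terminology (cogenerated versus generated) is appropriate and reflects the paper's own footnote.
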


\noindent Theorem \ref{thm_VP_Main} and Corollary \ref{cor_VP_cogen} can be generalized to other categories; see Section \ref{sec_GenOpen}.

The third ingredient of Theorem \ref{thm_MainVP_result} is:
\begin{theorem}[\v{S}aroch-Trlifaj~\cite{MR2336972}; Eklof-Trlifaj~\cite{MR1778163}]\label{thm_SarochTrlifaj}
If \eqref{eq_DiamondEverywhereIntro} holds, then every cotorsion pair that is cogenerated by a set, and whose left coordinate is downward closed under elementary submodules, is also generated by a set, and hence (by Eklof-Trlifaj~\cite{MR1798574}) complete.  
\end{theorem}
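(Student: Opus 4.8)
The plan is to separate the one cheap implication from the genuine content, which is a deconstructibility statement for the left-hand class. So suppose $(\mathcal{A},\mathcal{B})$ is cogenerated by a set, say $\mathcal{A}={}^{\perp}\mathcal{S}$ for a set $\mathcal{S}\subseteq\mathcal{B}$, and that $\mathcal{A}$ is downward closed under elementary submodules. By Eklof--Trlifaj~\cite{MR1798574} a cotorsion pair generated by a set is complete, so it suffices to produce a \emph{set} $\mathcal{A}_0\subseteq\mathcal{A}$ with $\mathcal{A}_0^{\perp}=\mathcal{B}$. The target is to show that $\mathcal{A}$ is \emph{deconstructible}: there is a regular cardinal $\kappa>|R|+|\mathcal{S}|+\sup\{\,|B|:B\in\mathcal{S}\,\}$ such that every $A\in\mathcal{A}$ admits a continuous increasing filtration starting at $0$, with union $A$, whose consecutive quotients are $<\kappa$-presented members of $\mathcal{A}$. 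Granting this, let $\mathcal{A}_0$ be a representative set of the $<\kappa$-presented members of $\mathcal{A}$; then $\mathcal{A}_0\subseteq\mathcal{A}$ gives $\mathcal{B}=\mathcal{A}^{\perp}\subseteq\mathcal{A}_0^{\perp}$, while if $Y\in\mathcal{A}_0^{\perp}$ then every $A\in\mathcal{A}$, being $\mathcal{A}_0$-filtered, satisfies $\mathrm{Ext}^1_R(A,Y)=0$ by Eklof's Lemma, so $Y\in\mathcal{A}^{\perp}=\mathcal{B}$; hence $\mathcal{A}_0^{\perp}=\mathcal{B}$ and the pair is generated by the set $\mathcal{A}_0$.

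Deconstructibility I would prove by induction on $|A|$. The cases $|A|<\kappa$ are trivial (then $A$ itself is $<\kappa$-presented, as $\kappa>|R|$), and the passage to singular $|A|$ is handled by Shelah's singular compactness theorem in its form for deconstructible classes, which applies because $\mathcal{A}={}^{\perp}\mathcal{S}$ is closed under transfinite extensions; no instance of $\Diamond$ is needed there. The substantive case is $|A|=\lambda$ with $\lambda\geq\kappa$ regular. Write $A=\bigcup_{\alpha<\lambda}A_\alpha$ as a continuous increasing union of submodules of size $<\lambda$ with $A_\alpha=N_\alpha\cap A$, where $(N_\alpha)_{\alpha<\lambda}$ is a continuous $\prec$-chain of elementary submodels of a sufficiently large structure $(H_\theta,\in,<^{*},A,\mathcal{S},\dots)$ carrying Skolem functions for the language of $R$-modules, with $\mathcal{S}\subseteq N_\alpha$ and $|N_\alpha|<\lambda$; closure of $\mathcal{A}$ under elementary submodules then gives $A_\alpha\in\mathcal{A}$ for a club of $\alpha$. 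For $B\in\mathcal{S}$, the long exact sequence of $\mathrm{Ext}^{*}_R(-,B)$ applied to $0\to A_\alpha\to A\to A/A_\alpha\to 0$, together with $\mathrm{Ext}^1_R(A,B)=0$, yields
\[
\mathrm{Ext}^1_R(A/A_\alpha,B)\;\cong\;\operatorname{coker}\bigl(\mathrm{Hom}_R(A,B)\to\mathrm{Hom}_R(A_\alpha,B)\bigr),
\]
so $A/A_\alpha\in\mathcal{A}$ precisely when, for every $B\in\mathcal{S}$, every homomorphism $A_\alpha\to B$ extends to $A$. If that holds for all $\alpha$ in a club $C$, then for $\alpha<\beta$ in $C$ with $A_\beta\in\mathcal{A}$ every $A_\alpha\to B$ extends to $A$ and hence, by restriction, to $A_\beta$, so the same computation for $0\to A_\alpha\to A_\beta\to A_\beta/A_\alpha\to 0$ gives $A_\beta/A_\alpha\in\mathcal{A}$. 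Re-indexing along the intersection of $C$ with $\{\alpha:A_\alpha\in\mathcal{A}\}$, refining each quotient $A_\beta/A_\alpha$ (of size $<\lambda$ and in $\mathcal{A}$) by the induction hypothesis, and concatenating then produces the desired filtration of $A$ by $<\kappa$-presented members of $\mathcal{A}$.

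Everything therefore reduces to the assertion that $E=\{\alpha<\lambda:A/A_\alpha\notin\mathcal{A}\}$ is non-stationary, which is the heart of the proof and the step I expect to be the main obstacle; this is exactly where the diamond hypothesis \eqref{eq_DiamondEverywhereIntro} enters, along the prediction-principle lines of Eklof--Trlifaj~\cite{MR1778163} and \v{S}aroch--Trlifaj~\cite{MR2336972}. If $E$ were stationary then, after shrinking, a single $B\in\mathcal{S}$ would satisfy $\mathrm{Ext}^1_R(A/A_\alpha,B)\neq 0$ for stationarily many $\alpha$, and $\Diamond_\lambda(E)$ --- available by \eqref{eq_DiamondEverywhereIntro} --- would allow one to build, by a diagonalization that twists the extension at each stage whose $\Diamond$-guess is correct, a non-split short exact sequence $0\to B\to X\to A\to 0$, i.e.\ a nonzero element of $\mathrm{Ext}^1_R(A,B)$, contradicting $A\in\mathcal{A}$. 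This is the module-theoretic counterpart of the diamond side of the independence of Whitehead's problem, and it is why the hypothesis must be invoked at \emph{every} regular uncountable $\lambda$ and \emph{every} stationary subset of $\lambda$: the relevant cardinality and the relevant ``bad set'' $E$ depend on the module $A$ and are not known in advance. Once $\mathcal{A}$ is shown to be deconstructible the first paragraph finishes the argument; the statement for hereditary rings, and in particular for \textbf{Ab}, then follows because for hereditary $R$ the class ${}^{\perp}X$ is closed under all submodules, hence in particular under elementary submodules.
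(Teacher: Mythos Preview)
The paper does not supply its own proof of this theorem; it is quoted as a black-box result from \v{S}aroch--Trlifaj and Eklof--Trlifaj (note the bracketed attribution in the \texttt{theorem} environment and the absence of any \texttt{proof} environment afterwards). So there is no ``paper's own proof'' to compare against.

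That said, your outline is a faithful sketch of the argument in the cited sources: reduce completeness to deconstructibility of $\mathcal{A}={}^{\perp}\mathcal{S}$, then prove deconstructibility by induction on $|A|$, handling singular cardinals by Shelah's singular compactness and regular $\lambda$ by showing the ``bad set'' $E=\{\alpha<\lambda:A/A_\alpha\notin\mathcal{A}\}$ is nonstationary via a $\Diamond_\lambda(E)$-guided diagonalization that would otherwise manufacture a nonzero element of $\mathrm{Ext}^1_R(A,B)$. Your long-exact-sequence reduction of $A/A_\alpha\in\mathcal{A}$ to an extension-of-homomorphisms condition, and your use of elementary-submodel chains to secure $A_\alpha\in\mathcal{A}$ on a club, are the right moves and use the hypothesis on $\mathcal{A}$ in the intended place. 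The one paragraph you flag as ``the main obstacle'' is indeed where all the work lies; as written it is a pointer to the cited papers rather than a proof, but that matches the paper's own treatment, which simply invokes the result.
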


We also prove a result using ``only" supercompact cardinals, which are weaker in consistency strength than Vop\v{e}nka's Principle.  A class $\mathcal{C}$ of modules is a \textbf{Kaplansky class} (\cite{MR1926201}) if there exists a cardinal $\kappa$ such that, for every $C \in \mathcal{C}$ and every $X \subseteq C$ with $|X|<\kappa$, there is a $<\kappa$-presented submodule $C_0$ of $C$ such that $X \subseteq C_0$, and both $C_0$ and $C/C_0$ are in $\mathcal{C}$.

\begin{theorem}\label{thm_SC_Kaplansky}
If there is a proper class of supercompact cardinals, then for every set $\mathcal{S}$ of modules over any ring, ${}^\perp \mathcal{S}$ is a Kaplansky class.  
\end{theorem}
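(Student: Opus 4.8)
The plan is to use a single supercompact cardinal $\kappa$ chosen large enough that $\mathcal{S}$ and the ring $R$ are ``small'' relative to $\kappa$; concretely, fix $\kappa$ supercompact with $|R|<\kappa$ and $|S|<\kappa$ for every $S\in\mathcal{S}$ (the hypothesis that there is a proper class of supercompacts is only needed so that such a $\kappa$ exists above any prescribed set of parameters — for a single fixed $\mathcal{S}$ one supercompact suffices). I claim this $\kappa$ witnesses that $\mathcal{C}={}^\perp\mathcal{S}$ is a Kaplansky class. So fix $C\in{}^\perp\mathcal{S}$ and $X\subseteq C$ with $|X|<\kappa$; I must produce a $<\kappa$-presented submodule $C_0\supseteq X$ with both $C_0\in{}^\perp\mathcal{S}$ and $C/C_0\in{}^\perp\mathcal{S}$.

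The key step is an elementary-submodel / Löwenheim--Skolem argument powered by supercompactness. Using $\kappa$-supercompactness of $\kappa$ (for a suitable target $\lambda$ large enough to see $C$, $\mathcal{S}$, and enough of the universe to compute $\mathrm{Ext}^1_R$), I would take an elementary embedding $j\colon V\to M$ with critical point $\kappa$, $j(\kappa)>\lambda$, and $M$ closed under $\lambda$-sequences, or — what is technically cleaner here — work directly with an internally approachable chain or with a $\kappa$-sized elementary submodel $N\prec H_\theta$ containing $R$, $\mathcal{S}$, $C$, and $X$ as elements, with ${}^{<\kappa}N\subseteq N$ and $|N|<\kappa$; the latter is exactly what $\kappa$ being (at least) $\lambda$-supercompact for $\lambda=|C|$ buys us, via the standard fact that supercompactness yields stationarily many such internally closed submodels. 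Set $C_0 := C\cap N$. Since $|N|<\kappa$ and ${}^{<\kappa}N\subseteq N$, the submodule $C_0$ is generated by ${<}\kappa$ elements, and the closure of $N$ under ${<}\kappa$-sequences lets one argue (as in the standard ``elementary submodels of $H_\theta$ are nice for module theory'' lemmas) that $C_0$ is actually $<\kappa$-presented: a presentation of $C_0$ of size $<\kappa$ can be found inside $N$ by elementarity, because $N$ thinks (correctly, by $\Sigma_1$-absoluteness with parameters in $N$) such a presentation exists.

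The heart of the matter is the two Ext-vanishing conditions, and this is where I expect the main obstacle. For $C_0\in{}^\perp\mathcal{S}$: since $C_0$ is $<\kappa$-presented and $|S|<\kappa$ for $S\in\mathcal{S}$, a long exact sequence / chain-complex computation of $\mathrm{Ext}^1_R(C_0,S)$ can be carried out inside $N$; by elementarity $N$ knows whether this Ext group vanishes, and since $C\in{}^\perp\mathcal{S}$ and ${}^\perp S$ is closed under the relevant direct-limit-of-$<\kappa$-presented-pieces structure, $N\models \mathrm{Ext}^1_R(C_0,S)=0$, which is absolute, so $C_0\in{}^\perp\mathcal{S}$. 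For $C/C_0\in{}^\perp\mathcal{S}$: apply $\mathrm{Hom}_R(-,S)$ to $0\to C_0\to C\to C/C_0\to 0$ to get
\[
\mathrm{Ext}^1_R(C,S)\longrightarrow \mathrm{Ext}^1_R(C_0,S)\longrightarrow \mathrm{Ext}^2_R(C/C_0,S)\longrightarrow\cdots
\]
which does not immediately give $\mathrm{Ext}^1_R(C/C_0,S)=0$; instead I would argue that $C/C_0$ is a continuous directed union of $<\kappa$-presented modules of the form $(C\cap N')/C_0$ for $N'$ ranging over an increasing continuous chain of elementary submodels above $N$, each successive quotient being $<\kappa$-presented and (by the same absoluteness argument applied to the relative situation) in ${}^\perp\mathcal{S}$, and then invoke the Eklof-type lemma that ${}^\perp\mathcal{S}$ is closed under transfinite extensions ($\mathcal{S}$-filtrations) to conclude $C/C_0\in{}^\perp\mathcal{S}$. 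Making the relative Ext-computation genuinely absolute between $N$ (or the chain of models) and $V$ — i.e. checking that a projective resolution witnessing Ext-vanishing can be found, and recognized as such, inside the elementary submodel, uniformly along the chain — is the delicate point and the step most likely to require care with the choice of $\theta$ and with which fragment of set theory the submodels reflect.
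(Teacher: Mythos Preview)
Your outline has the right shape (take $C_0 = C \cap \mathfrak{N}$ for a suitable small $\mathfrak{N}\prec H_\lambda$), but two points fail. First, the model you request cannot exist: $|N|<\kappa$ together with ${}^{<\kappa}N\subseteq N$ is impossible for any limit cardinal $\kappa$ by a counting argument. What supercompactness actually supplies, via Magidor's characterization, is stationarily many $\mathfrak{N}$ of size $<\kappa$ whose transitive collapse equals $H_\mu$ for some cardinal $\mu$ (the paper calls these \emph{0-guessing}); it is this ``the collapse is an $H_\mu$'' property---hence closed under subsets of its elements and correct about $\mathrm{Ext}$---rather than literal sequence-closure, that drives the absoluteness you want.

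Second, and more seriously, your treatment of $C/C_0\in{}^\perp\mathcal{S}$ is a genuine gap. The filtration-plus-Eklof plan is circular: Eklof's lemma needs each successive quotient $(C\cap N_{i+1})/(C\cap N_i)$ to lie in ${}^\perp\mathcal{S}$, which is precisely the kind of ``quotient by an elementary-submodel trace'' statement you are trying to prove, and the vague appeal to ``the same absoluteness argument in the relative situation'' does not supply it. You also looked at the wrong segment of the long exact sequence. The relevant piece is
\[
\mathrm{Hom}(C,S)\longrightarrow \mathrm{Hom}(C_0,S)\longrightarrow \mathrm{Ext}^1(C/C_0,S)\longrightarrow \mathrm{Ext}^1(C,S)=0,
\]
so it suffices to show that the restriction map $\mathrm{Hom}(C,S)\to\mathrm{Hom}(C_0,S)$ is surjective. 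Here the 0-guessing hypothesis does the real work: since $|S|<\kappa$ and $S\in\mathfrak{N}$, in fact $S\subseteq\mathfrak{N}$, so any $\phi\colon C_0\to S$ has its graph inside $\mathfrak{N}$; its pointwise collapse $\bar\phi$ is then a subset of $C_{\mathfrak{N}}\times S_{\mathfrak{N}}\in H_\mu$, hence $\bar\phi\in H_\mu=\mathfrak{H}(\mathfrak{N})$, and applying the inverse collapse map yields a homomorphism $C\to S$ extending $\phi$. This homomorphism-lifting lemma (Lemma~\ref{lem_IntersectExt} in the paper) is the key idea you are missing; once you have it---and the parallel Ext-absoluteness gives $C_0\in{}^\perp\mathcal{S}$ directly---no filtration is needed at all.
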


\noindent The key to Theorem \ref{thm_SC_Kaplansky} is Lemma \ref{lem_IntersectExt} (page \pageref{lem_IntersectExt}), which may be of interest elsewhere; roughly speaking, supercompactness of $\kappa$ yields many elementary submodels $\mathfrak{N}$ of the universe of size $<\kappa$ such that ``intersection with $\mathfrak{N}$" commutes with the Ext functor (for objects that $\mathfrak{N}$ has access to).

Kaplansky classes that are also closed under direct limits are in fact \emph{deconstructible} (see \cite{MR3010854}). And cotorsion pairs whose left coordinate is deconstructible are complete (see \cite{MR2985554}).  Hence:

\begin{corollary}\label{cor_SC}
If there is a proper class of supercompact cardinals, then all cotorsion pairs that are cogenerated by a set, and whose left coordinate is closed under direct limits, are complete.
\end{corollary}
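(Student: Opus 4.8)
The plan is to derive the corollary by chaining Theorem \ref{thm_SC_Kaplansky} with the two facts recalled immediately before the statement: that a Kaplansky class which is also closed under direct limits is deconstructible (\cite{MR3010854}), and that a cotorsion pair whose left coordinate is deconstructible is complete (\cite{MR2985554}).

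First I would unwind the hypothesis. If $(\mathcal{A},\mathcal{B})$ is cogenerated by a set, then by definition there is a \emph{set} $\mathcal{S}$ of modules with $\mathcal{A} = {}^\perp\mathcal{S}$; and the extra hypothesis says $\mathcal{A}$ is closed under direct limits. Applying Theorem \ref{thm_SC_Kaplansky} to this set $\mathcal{S}$ (using the assumed proper class of supercompact cardinals), we get that $\mathcal{A} = {}^\perp\mathcal{S}$ is a Kaplansky class, witnessed by some cardinal $\kappa$.

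Next I would record the ambient closure properties of $\mathcal{A}$ needed to invoke the deconstructibility theorem: being of the form ${}^\perp\mathcal{S}$, the class $\mathcal{A}$ is closed under extensions, arbitrary direct sums, direct summands, and transfinite extensions (Eklof's Lemma), and it contains all projective modules, hence a generator. Combining these standard facts with the Kaplansky property and closure under direct limits, \cite{MR3010854} yields that $\mathcal{A}$ is deconstructible, i.e.\ $\mathcal{A} = \mathrm{Filt}(\mathcal{S}')$ for some \emph{set} $\mathcal{S}' \subseteq \mathcal{A}$. Finally \cite{MR2985554} applies: the left coordinate $\mathcal{A}$ is deconstructible, hence special precovering, so $(\mathcal{A},\mathcal{B})$ has enough projectives, i.e.\ is complete.

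The real content of the corollary lies entirely in Theorem \ref{thm_SC_Kaplansky}, whose proof rests on Lemma \ref{lem_IntersectExt}; the corollary itself is essentially bookkeeping. The only place that requires a little care is verifying that all the hypotheses of the ``Kaplansky $+$ closed under direct limits $\Rightarrow$ deconstructible'' theorem of \cite{MR3010854} are satisfied here --- in particular the closure-under-transfinite-extensions condition and the presence of a generator --- but these are routine properties of the left-hand class of any cotorsion pair, so I do not expect a genuine obstacle.
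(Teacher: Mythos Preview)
Your proposal is correct and follows exactly the route the paper takes: the corollary is stated immediately after recalling the two facts you cite (Kaplansky $+$ closed under direct limits $\Rightarrow$ deconstructible, and deconstructible left coordinate $\Rightarrow$ complete), and the paper's entire proof is the single word ``Hence''. Your additional remarks about verifying the standing hypotheses of the deconstructibility theorem are more careful than the paper itself, but do not depart from its argument.
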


Corollary \ref{cor_SC} can be viewed as a variant of a theorem of El Bashir~\cite{MR2252654}, who proved that under Vop\v{e}nka's Principle, every class of modules that is closed under direct sums \emph{and direct limits} is a covering class.  In particular, his theorem yielded that under Vop\v{e}nka's Principle, cotorsion pairs whose left coordinate is closed under direct limits are complete.  Corollary \ref{cor_SC} uses a weaker assumption than Vop\v{e}nka's Principle, but at the expense of only being able to prove it for cotorsion pairs cogenerated by a set.

Section \ref{sec_Prelims} includes preliminaries, Section \ref{sec_SC} proves Theorem \ref{thm_SC_Kaplansky}, and Section \ref{sec_VP} proves Theorem \ref{thm_MainVP_result}.  Sections \ref{sec_SC} and \ref{sec_VP} are entirely independent of each other, but Section \ref{sec_SC} appears first because it is slightly more concrete than the proof from Vop\v{e}nka's Principle.  Section \ref{sec_GenOpen} discusses generalizations and open problems, and Appendix \ref{app_VPDIAMOND} proves that CON(VP) implies CON(VP + \eqref{eq_DiamondEverywhereIntro}).

\section{Preliminaries}\label{sec_Prelims}

All notation and terminology agrees with Jech~\cite{MR1940513} and G\"obel-Trlifaj~\cite{MR2985554}.  By ``$R$-module" we will officially mean left $R$-module.

If $\mathcal{C}$ is a class, then both
\begin{equation}\label{eq_Cogen}
\Big( {}^\perp \mathcal{C}, \big( {}^\perp \mathcal{C} \big)^\perp  \Big)
\end{equation}
and
\begin{equation}\label{eq_Gen}
\Big( {}^\perp \big( \mathcal{C}^\perp \big), \mathcal{C}^\perp   \Big)
\end{equation}
are cotorsion pairs; \eqref{eq_Cogen} is called the cotorsion pair \textbf{cogenerated by $\boldsymbol{\mathcal{C}}$}, and \eqref{eq_Gen} is called the cotorsion pair \textbf{generated by $\boldsymbol{\mathcal{C}}$}, using the terminology of G\"obel-Trlifaj~\cite{MR2985554}.\footnote{The terminology is inconsistent across the literature; e.g., the meanings of ``cogenerated by" and ``generated by" given above are switched, for example, in Eklof-Trlifaj~\cite{MR1798574}.}  We say that a cotorsion pair is \textbf{cogenerated by a set} if it is of the form \eqref{eq_Cogen} where $\mathcal{C}$ is a set (as opposed to a proper class), and \textbf{generated by a set} if it is of the form \eqref{eq_Gen} where $\mathcal{C}$ is a set.  Around the turn of the millennium, Eklof and Trlifaj proved the following landmark theorem, which was key to one of the solutions of the Flat Cover Conjecture (\cite{MR1832549}):
\begin{theorem}[Eklof-Trlifaj~\cite{MR1798574}]\label{thm_ET_GenSet}
All cotorsion pairs generated by sets are complete.
\end{theorem}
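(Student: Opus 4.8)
The plan is to carry out the transfinite ``small object argument'' for cotorsion pairs. Fix a set $\mathcal{S}$ of $R$-modules. Because $\big(\bigoplus_{S\in\mathcal{S}}S\big)^\perp=\bigcap_{S\in\mathcal{S}}S^\perp=\mathcal{S}^\perp$, replacing $\mathcal{S}$ by the single module $\bigoplus\mathcal{S}$ does not change the generated cotorsion pair, so we may assume $\mathcal{S}=\{S\}$ is a singleton and write $(\mathcal{A},\mathcal{B})=\big({}^\perp(S^\perp),\,S^\perp\big)$. Fix a regular cardinal $\kappa>|R|+|S|$, so that $S$ is ${<}\kappa$-presented. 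Given an arbitrary module $M$, the goal is to produce a short exact sequence $0\to B\to A\to M\to 0$ with $A\in\mathcal{A}$ and $B\in\mathcal{B}$; this is a special $\mathcal{A}$-precover of $M$, which is precisely what ``completeness'' (enough projectives) demands.

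The basic move is a single pushout step. For a module $N$, choose a generating set $(e_i)_{i\in I}$ of the abelian group $\mathrm{Ext}^1_R(S,N)$, each $e_i$ represented by a short exact sequence $0\to N\to X_i\to S\to 0$; amalgamating the family $(X_i)_{i\in I}$ along $N$ yields a short exact sequence $0\to N\to N^{\sharp}\to S^{(I)}\to 0$ for which the connecting homomorphism $\mathrm{Ext}^1_R(S,N)\to\mathrm{Ext}^1_R(S,N^{\sharp})$ vanishes. Now start from a surjection $p_0\colon P_0\twoheadrightarrow M$ with $P_0$ free and $K_0=\ker p_0$, and build a continuous chain of short exact sequences $0\to K_\alpha\to P_\alpha\xrightarrow{p_\alpha}M\to 0$ for $\alpha\le\kappa$: pass from stage $\alpha$ to stage $\alpha+1$ by pushing the $\alpha$th sequence out along the inclusion $K_\alpha\hookrightarrow K_\alpha^{\sharp}$ (so $K_{\alpha+1}=K_\alpha^{\sharp}$ and $P_{\alpha+1}/P_\alpha\cong S^{(I_\alpha)}$, since pushout along a monomorphism preserves cokernels), and take unions at limit stages. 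Set $A:=P_\kappa$ and $B:=K_\kappa$; then $0\to B\to A\to M\to 0$ is exact, and it remains only to check $A\in\mathcal{A}$ and $B\in\mathcal{B}$.

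That $A\in\mathcal{A}$ is Eklof's Lemma. The chain $0\subseteq P_0\subseteq P_1\subseteq\cdots\subseteq P_\kappa=A$ is continuous, its bottom term $P_0$ is free (hence in ${}^\perp(S^\perp)$), and each successive quotient $P_{\alpha+1}/P_\alpha\cong S^{(I_\alpha)}$ lies in ${}^\perp(S^\perp)$, being a direct sum of copies of $S$; so $A$ is filtered by objects of $\mathcal{A}={}^\perp(S^\perp)$, and Eklof's Lemma says such a module is again in $\mathcal{A}$. Eklof's Lemma is itself proved by transfinite induction along the filtration: the successor step is a one-line chase in the long exact sequence of $\mathrm{Hom}(-,C)$, and the genuinely delicate point is the limit step, where, given an extension of an $S^\perp$-module $C$ by $A_\lambda=\bigcup_{\alpha<\lambda}A_\alpha$, one produces a splitting by a coherent transfinite recursion, extending a partial section from $A_\alpha$ to $A_{\alpha+1}$ using that the obstruction lies in $\mathrm{Ext}^1$ of the quotient $A_{\alpha+1}/A_\alpha$, which vanishes.

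For $B\in\mathcal{B}$: let $e\in\mathrm{Ext}^1_R(S,B)$. Pick a presentation $0\to\Omega\to F\to S\to 0$ with $F$ free on ${<}\kappa$ generators and $\Omega$ generated by ${<}\kappa$ elements (possible since $\kappa>|R|+|S|$). The classifying map $\phi\colon\Omega\to B$ of $e$ has image contained in some $K_\alpha$ with $\alpha<\kappa$, because $B=\bigcup_{\alpha<\kappa}K_\alpha$ is an increasing union of regular length $\kappa$ and $\mathrm{im}(\phi)$ is ${<}\kappa$-generated; hence $e$ is the image of some $e_\alpha\in\mathrm{Ext}^1_R(S,K_\alpha)$ under $\mathrm{Ext}^1_R(S,K_\alpha)\to\mathrm{Ext}^1_R(S,B)$. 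But this map factors through $\mathrm{Ext}^1_R(S,K_{\alpha+1})$, and $\mathrm{Ext}^1_R(S,K_\alpha)\to\mathrm{Ext}^1_R(S,K_{\alpha+1})$ is zero by construction, so $e=0$. Thus $B\in S^\perp=\mathcal{B}$, and $0\to B\to A\to M\to 0$ is the required special $\mathcal{A}$-precover, so the cotorsion pair is complete. The two places where real work happens are exactly the limit stage of Eklof's Lemma and the ``the extension is already defined at a bounded stage'' argument just used --- the latter being where the choice of a regular cardinal above the size of the generating object is essential; everything else is routine bookkeeping with pushouts along monomorphisms and continuous chains.
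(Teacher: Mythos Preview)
The paper does not give its own proof of this theorem: it is stated in the Preliminaries section as a cited result of Eklof--Trlifaj~\cite{MR1798574} and used throughout as a black box, so there is nothing in the paper to compare your argument against.

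That said, your proposal is correct and is essentially the standard proof (as in the original Eklof--Trlifaj paper and in G\"obel--Trlifaj~\cite{MR2985554}): reduce to a single generator $S$, build the special precover by a length-$\kappa$ iteration of the one-step ``kill all extensions of $S$'' construction via pushouts, invoke Eklof's Lemma for the left-hand class, and use the $<\kappa$-presentability of $S$ together with the regularity of $\kappa$ to show the kernel lands in $S^\perp$. One minor remark: in your reduction step you need that the new singleton $S=\bigoplus\mathcal{S}$ is still a \emph{set}-sized module, which is exactly where the hypothesis that $\mathcal{S}$ is a set (not a proper class) enters; you use this implicitly when you pick $\kappa>|R|+|S|$.
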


 For structures $\mathfrak{A}$ and $\mathfrak{B}$ in a fixed first order signature, $\mathfrak{A} \prec \mathfrak{B}$ means that $\mathfrak{A}$ is an elementary substructure of $\mathfrak{B}$.  For a ring $R$, the language for $R$-modules consists of the usual language for abelian groups, together with, for each $r \in R$, a function symbol for scalar multiplication by $r$ (so for infinite $R$, the language for $R$-modules has cardinality $|R|$).  For an infinite cardinal $\lambda$, $H_\lambda$ refers to the set of all sets of hereditary cardinality $<\lambda$, and $\mathfrak{H}_\lambda$ refers to the structure $(H_\lambda,\in)$.  The reader more familiar with the $V_\alpha$ hierarchy can just as well use those instead. The equality $V_\lambda= H_\lambda$ holds for the closed unbounded class of $\lambda$ that are fixed points of the $\beth$ function.  The structure $\mathfrak{H}_\lambda$ is a $\Sigma_1$-elementary substructure of the universe for every uncountable cardinal $\lambda$.

For a regular uncountable cardinal $\kappa$ and a cardinal $\lambda \ge \kappa$, let $\wp^*_\kappa(H_\lambda)$ denote the set of $N \subset H_\lambda$ such that $|N|<\kappa$, $N \cap \kappa \in \kappa$, and $\mathfrak{N}=(N,\in)$ is an elementary substructure of $\mathfrak{H}_\lambda$.  Any such $\mathfrak{N}$ is extensional, so Mostowski's collapsing theorem applies.  For such $\mathfrak{N}$, let $\mathfrak{H}(\mathfrak{N})$ denote the Mostowski collapse of $\mathfrak{N}$, and let 
\[
\sigma_{\mathfrak{N}}: \mathfrak{H}(\mathfrak{N}) \to_{\text{iso}} \mathfrak{N} \ \prec \ \mathfrak{H}_\lambda 
\]
denote the inverse of the collapsing map, which can be viewed as an elementary embedding from $\mathfrak{H}(\mathfrak{N}) \to \mathfrak{H}_\lambda$.  For $b \in \mathfrak{N}$, $b_{\mathfrak{N}}$ will denote $\sigma^{-1}_{\mathfrak{N}}(b)$; i.e., $b_{\mathfrak{N}} \in \mathfrak{H}(\mathfrak{N})$ is the image of $b$ under the transitive collapsing map for $\mathfrak{N}$.  A set $S \subseteq \wp_\kappa^*(H_\lambda)$ is called \textbf{stationary (in $\boldsymbol{\wp^*_\kappa(H_\lambda)}$)} if, for every $p_1,\dots,p_k \in H_\lambda$, there exists an $\mathfrak{N} \in S$ such that $\{ p_1,\dots,p_k \} \subset  \mathfrak{N} \prec \mathfrak{H}_\lambda$.\footnote{There are other equivalent ways of defining this kind of stationarity.  See Lemma 0 part (a) of Foreman-Magidor-Shelah~\cite{MR924672}.}  Note that if $|\mathfrak{N}|<\kappa$, then $\mathfrak{H}(\mathfrak{N})$ is both an element and subset of $H_\kappa$; in particular, $b_{\mathfrak{N}}$ is of hereditary cardinality $<\kappa$ for all $b \in \mathfrak{N}$.  

\begin{fact}\label{fact_ElemSubmodule}

Suppose $R$ is a ring of size $<\kappa$, $M$ is an $R$-module, $\mathfrak{N} \in \wp^*_\kappa(H_\lambda)$, and 
\[
\{ R,M \} \subset \mathfrak{N} \prec \mathfrak{H}_\lambda.
\]
Then:
\begin{enumerate}
 \item\label{item_Subset} $R$ is also a subset of $\mathfrak{N}$; in fact any $X \in \mathfrak{N}$ such that $|X|<\kappa$ is also a subset of $\mathfrak{N}$;
 \item $\mathfrak{N} \cap M$ is an elementary submodule of $M$;
 \item  $\sigma_{\mathfrak{N}} \restriction M_{\mathfrak{N}}$ is an elementary embedding (in the language of $R$-modules) from $M_{\mathfrak{N}} \to M$ with image $\mathfrak{N} \cap M$. 
\end{enumerate}
\end{fact}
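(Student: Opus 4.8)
The plan is to establish the three items in order, using only standard elementary-submodel technology; essentially all of the real work is concentrated in part \eqref{item_Subset}, and parts (2) and (3) then follow by absoluteness of $\Sigma_0$ notions together with routine bookkeeping about Mostowski collapses. For \eqref{item_Subset}, suppose $X \in \mathfrak{N}$ with $|X| < \kappa$. Since $X \in H_\lambda$ and $|X| < \kappa \le \lambda$, the structure $\mathfrak{H}_\lambda$ satisfies the sentence ``there is an ordinal $\alpha$ and a bijection $f : \alpha \to X$'' (with parameter $X$). By $\{X\}$-elementarity of $\mathfrak{N} \prec \mathfrak{H}_\lambda$ there are $\delta', g' \in \mathfrak{N}$ witnessing this inside $\mathfrak{H}_\lambda$; since ``is an ordinal'' and ``is a bijection onto $X$'' are absolute, $g' : \delta' \to X$ is genuinely a bijection, so $|\delta'| = |X| < \kappa$ and hence $\delta' < \kappa$ (as $\kappa$ is a cardinal). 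Thus $\delta' \in \mathfrak{N} \cap \kappa$; and because $\mathfrak{N} \cap \kappa$ is an ordinal $<\kappa$, every $\xi < \delta'$ also lies in $\mathfrak{N} \cap \kappa \subseteq \mathfrak{N}$. Applying elementarity once more, $g'(\xi) \in \mathfrak{N}$ for each $\xi < \delta'$, so $X = \mathrm{range}(g') \subseteq \mathfrak{N}$. Taking $X = R$ (legitimate since $|R| < \kappa$) gives $R \subseteq \mathfrak{N}$.

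For (2): by part \eqref{item_Subset} we have $R \subseteq \mathfrak{N}$. The operations $+_M$, $-_M$, the constant $0_M$, and the scalar action $\mu_M : R \times M \to M$ are all elements of (or definable in) $\mathfrak{H}_\lambda$ from the parameters $M$ and $R$, hence lie in $\mathfrak{N}$; elementarity then shows $\mathfrak{N} \cap M$ is closed under them — here $R \subseteq \mathfrak{N}$ is what lets us feed arbitrary scalars in — so $\mathfrak{N} \cap M$ is a submodule of $M$. Elementarity in the language of $R$-modules is the Tarski–Vaught test applied one formula at a time: a given formula $\varphi(\bar x)$ mentions only finitely many scalar function symbols, indexed by elements of $R \subseteq \mathfrak{N}$, and $M$ together with its (set-sized) satisfaction relation lies in $\mathfrak{N}$; so if $M \models \exists y\, \varphi(y,\bar a)$ for $\bar a \in \mathfrak{N}\cap M$, elementarity of $\mathfrak{N} \prec \mathfrak{H}_\lambda$ produces a witness $y_0 \in \mathfrak{N} \cap M$. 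Hence $\mathfrak{N} \cap M \prec M$.

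For (3): write $\pi_{\mathfrak{N}}$ for the transitive collapse, so $\sigma_{\mathfrak{N}} = \pi_{\mathfrak{N}}^{-1}$ and $\sigma_{\mathfrak{N}} : \mathfrak{H}(\mathfrak{N}) \to \mathfrak{H}_\lambda$ is elementary. By definition of the collapse, $\sigma_{\mathfrak{N}}$ carries $M_{\mathfrak{N}} = \pi_{\mathfrak{N}}(M)$ bijectively onto $M \cap \mathfrak{N}$, which gives the asserted image. Since $R \subseteq \mathfrak{N}$, the collapse restricts to a ring isomorphism $\sigma_{\mathfrak{N}} : R_{\mathfrak{N}} \to R$ (bijective by the same argument, operation-preserving by elementarity), and via this identification $M_{\mathfrak{N}}$ becomes an $R$-module in the obvious way. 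Elementarity of $\sigma_{\mathfrak{N}} : \mathfrak{H}(\mathfrak{N}) \to \mathfrak{H}_\lambda$ then makes $\sigma_{\mathfrak{N}} \restriction M_{\mathfrak{N}}$ an isomorphism of $R$-modules onto the submodule $M \cap \mathfrak{N}$; composing with the elementary inclusion $M \cap \mathfrak{N} \prec M$ from (2) shows $\sigma_{\mathfrak{N}} \restriction M_{\mathfrak{N}}$ is an elementary embedding in the language of $R$-modules, as claimed.

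The only genuinely delicate point is the one in part (3): the signature ``language of $R$-modules'' literally depends on $R$, while the collapsed structure $M_{\mathfrak{N}}$ a priori has its scalars drawn from $R_{\mathfrak{N}}$ rather than from $R$. This is exactly what part \eqref{item_Subset} is for — it guarantees $R \subseteq \mathfrak{N}$, so the collapse identifies $R_{\mathfrak{N}}$ with $R$ and the two signatures can be matched. Everything else reduces to absoluteness of $\Sigma_0$ statements (being an ordinal, being a bijection, the finitary satisfaction relation of a set-sized structure) together with the elementary properties of Mostowski collapses.
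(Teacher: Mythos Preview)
Your proof is correct and follows essentially the same approach as the paper's. The paper's argument is very terse---it simply notes that part~\eqref{item_Subset} follows from the transitivity of $\mathfrak{N}\cap\kappa$ (deferring details to a reference), that part~(2) follows because the signature of $M$ lies in $\mathfrak{N}$ so $\mathfrak{N}$ is closed under Skolem functions for $M$, and that part~(3) holds because $\sigma_{\mathfrak{N}}[b_{\mathfrak{N}}]=b\cap\mathfrak{N}$ for any $b\in\mathfrak{N}$---whereas you have supplied the routine details the paper omits, including the standard bijection-plus-transitivity argument for \eqref{item_Subset} and the explicit Tarski--Vaught verification for~(2).
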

\begin{proof}
Part \ref{item_Subset} follows from the fact that $\mathfrak{N} \cap \kappa$ is transitive, which was part of the definition of $\wp^*_\kappa(H_\lambda)$; see \cite{Cox_MaxDecon} for details.  It follows that the signature of $M$ is both an element and a subset of $\mathfrak{N}$, which ensures that $\mathfrak{N}$ is closed under Skolem functions for $M$.  The last part is just because for any $b \in \mathfrak{N}$, $\sigma_{\mathfrak{N}}(b_{\mathfrak{N}}) = b \cap \mathfrak{N}$. 
\end{proof}

The following lemma will be used only in the proof of Theorem \ref{thm_SC_Kaplansky}.
\begin{lemma}\label{lem_Absoluteness}
Let $R$ be a ring.
\begin{enumerate}[label=(\roman*)]
 \item\label{item_TransHom} Suppose $H$ is a transitive set and $\pi: A \to B$ is a function such that $\{ R,A,B,\pi \} \subset H$.  Then the statement ``$\pi$ is an $R$-module homomorphism from $A \to B$" is absolute between $(H,\in)$ and the universe of sets.
 
 \item If $\mu$ is an uncountable cardinal such that $R \in H_\mu$, then $\mathfrak{H}_\mu$ correctly computes $\text{Ext}^n_R$ for all $n \in \mathbb{N}$; i.e., for all $R$-modules $X,Y \in \mathfrak{H}_\mu$, $\text{Ext}^n_R(X,Y)$ as computed in $\mathfrak{H}_\mu$ is the real $\text{Ext}^n_R(X,Y)$.

\end{enumerate}

\end{lemma}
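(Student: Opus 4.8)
The plan is to prove Lemma~\ref{lem_Absoluteness} by the usual absoluteness-of-$\Sigma_0$-facts-plus-witnesses strategy. For part~\ref{item_TransHom}, the key observation is that the assertion ``$\pi$ is an $R$-module homomorphism from $A$ to $B$'' is, once the parameters $R,A,B,\pi$ are in the transitive set $H$, a bounded (i.e., $\Delta_0$) statement about those parameters: it asserts that $\pi$ is a function with domain $A$ and range contained in $B$ (a statement with all quantifiers bounded by $A$, $B$, $\pi$), that $\pi$ respects the group operation (a statement of the form ``for all $a,a'\in A$, $\pi(a+a')=\pi(a)+\pi(a')$'', with all quantifiers bounded by $A$), and that for each $r\in R$, $\pi(r\cdot a)=r\cdot\pi(a)$ for all $a\in A$ (quantifiers bounded by $R$ and $A$). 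I would note that $H$, being transitive, contains all elements of $A$, $B$, $R$ and all relevant pairs and values of $\pi$, so each bounded quantifier ranging over $A$, $B$, or $R$ in $H$ ranges over exactly the same objects as in $V$; hence the conjunction of these bounded formulas is absolute between $(H,\in)$ and $V$ by the standard Levy absoluteness lemma for $\Delta_0$ formulas. The only mild subtlety worth spelling out is that one should first fix a concrete way of coding ``$R$-module'' so that the operations $+$ and, for each $r$, scalar multiplication by $r$ are recoverable in a $\Delta_0$ way from the set-theoretic object $A$; with the conventional coding of a structure as a tuple this is routine.

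For part~(ii), the plan is to reduce to part~\ref{item_TransHom} together with the absoluteness of forming projective (or free) resolutions. I would argue as follows. Given $R$-modules $X,Y\in H_\mu$, work inside $\mathfrak{H}_\mu$ and in $V$ and compare. First, $\mathfrak{H}_\mu$ is closed under the basic operations needed: since $R\in H_\mu$ and $\mu$ is an uncountable cardinal, $H_\mu$ is closed under finite sequences, finite products, direct sums indexed by elements of $H_\mu$, taking submodules and quotients, and forming the free $R$-module on a set in $H_\mu$; all of these are absolute because they are defined by $\Delta_0$ (or $\Sigma_1$) recipes with parameters in $H_\mu$. Hence one can build, inside $\mathfrak{H}_\mu$, a projective resolution $P_\bullet\twoheadrightarrow X$ consisting of modules and maps all lying in $H_\mu$, and $\mathfrak{H}_\mu$ agrees with $V$ that this is a projective resolution (using part~\ref{item_TransHom} to see that the maps really are homomorphisms, and absoluteness of exactness, which is again $\Delta_0$ in the relevant parameters). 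Then $\mathrm{Ext}^n_R(X,Y)$ as computed in $\mathfrak{H}_\mu$ is the cohomology of the complex $\mathrm{Hom}_R(P_\bullet,Y)$ as computed in $\mathfrak{H}_\mu$. So it suffices to check two things: (a) for projective (even free) $P\in H_\mu$ and $Y\in H_\mu$, the set $\mathrm{Hom}_R(P,Y)$ computed in $\mathfrak{H}_\mu$ equals the true $\mathrm{Hom}_R(P,Y)$, and (b) the induced maps and the subquotient (kernel mod image) operation are computed correctly. Claim~(a) follows because a homomorphism from a free module $R^{(I)}$ with $I\in H_\mu$ to $Y\in H_\mu$ is determined by, and codeable as, a function $I\to Y$, which is itself an element of $H_\mu$; so $H_\mu$ sees all of them, and by part~\ref{item_TransHom} it recognizes exactly the ones that are homomorphisms. (For a general projective summand of a free module in $H_\mu$ the same bound applies since it injects into that free module.) Claim~(b) is once more $\Delta_0$-absoluteness: kernels, images, and quotient groups of abelian groups in $H_\mu$ are computed the same way in $\mathfrak{H}_\mu$ and $V$. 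Finally, independence of the resolution: since $\mathfrak{H}_\mu$ proves enough of ZFC to carry out the standard comparison-of-resolutions argument, and all the comparison maps and homotopies involved can be taken in $H_\mu$ (again by the free-module cardinality bound), $\mathfrak{H}_\mu$ proves that $\mathrm{Ext}^n_R(X,Y)$ is well-defined, and the common value equals the one computed in $V$.

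I would organize the write-up as: (1) a lemma-internal paragraph establishing part~\ref{item_TransHom} via $\Delta_0$-absoluteness, being explicit about the bounded quantifiers; (2) a paragraph listing the closure properties of $H_\mu$ that are used (finite sequences, free modules on sets in $H_\mu$, submodules, quotients, $\mathrm{Hom}$ into a module in $H_\mu$), each justified by a one-line absoluteness remark; (3) the reduction of $\mathrm{Ext}^n_R(X,Y)$ to the cohomology of $\mathrm{Hom}_R(P_\bullet,Y)$ for a resolution $P_\bullet\in H_\mu$, invoking (1) and (2); and (4) the well-definedness (independence of resolution) remark. The main obstacle, such as it is, is purely bookkeeping: one must make sure that every auxiliary object produced while computing $\mathrm{Ext}$ — the resolution, the Hom-complex, the comparison maps witnessing independence of the resolution — can be chosen inside $H_\mu$, and the crucial enabling fact is simply that a free $R$-module on an index set of size $<\mu$, together with all homomorphisms out of it into a module of hereditary size $<\mu$, lies in $H_\mu$ because $\mu$ is a cardinal (so $H_\mu$ is closed under forming sequences and power-set-free coding of functions between its elements). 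There is no deep set theory here; the point is exactly that $\mathrm{Ext}$ is a $\Sigma_1$-over-$H_\mu$ notion with witnesses bounded in $H_\mu$, hence absolute, and I would state it that way rather than grinding through each diagram chase.
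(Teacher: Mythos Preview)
Your proposal is correct and follows essentially the same approach as the paper: part~\ref{item_TransHom} is dispatched by observing that ``$\pi$ is an $R$-module homomorphism'' is a $\Sigma_0$ statement and hence absolute between transitive sets, and part~(ii) is handled by noting that a projective resolution of $X$ of hereditary cardinality $<\mu$ exists in $\mathfrak{H}_\mu$, that $\mathrm{Hom}_R(X,Y)\subset\mathfrak{H}_\mu$ by $<\mu$-closure, and that part~\ref{item_TransHom} then guarantees the relevant homology groups are computed correctly. Your write-up is considerably more detailed (closure properties, independence of resolution, explicit bounded quantifiers) than the paper's, which compresses all of this into a short paragraph, but the strategy is identical.
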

\begin{proof}
The statement ``$\pi$ is an $R$-module homomorphism from $A \to B$" is a $\Sigma_0$ statement in the language of set theory, so is absolute between all transitive sets (see Jech~\cite{MR1940513}).  Now suppose $\mu$ is an uncountable cardinal, and $A \in \mathfrak{H}_\mu$ is an $R$-module.  Since $A$ and $R$ both have cardinality $<\mu$, there is a projective resolution $\vec{P}$ of $A$ of hereditary cardinality $<\mu$, and hence $\vec{P} \in  \mathfrak{H}_\mu$.  Moreover, for all $R$-modules $X,Y$ in $\mathfrak{H}_\mu$, the $<\mu$-closure of $\mathfrak{H}_\mu$ ensures that $\text{Hom}_R(X,Y) \subset \mathfrak{H}_\mu$ (and part \ref{item_TransHom} ensures that $\mathfrak{H}_\mu$ and the universe agree about which elements of $\mathfrak{H}_\mu$ count as $R$-module homomorphisms).  This ensures that $\mathfrak{H}_\mu$ correctly computes the relevant homology groups that define $\text{Ext}^n(X,Y)$. 
\end{proof}

\section{Proof of Theorem \ref{thm_SC_Kaplansky}}\label{sec_SC}

Following Viale~\cite{Viale_GuessingModel}, we say that $\mathfrak{N}\in \wp^*_\kappa(H_\lambda)$ is \textbf{0-guessing} if $\mu:=$ (the ordertype of $\mathfrak{N} \cap \lambda$) is a cardinal, and the transitive collapse $\mathfrak{H}(\mathfrak{N})$ of $\mathfrak{N}$ is equal to $\mathfrak{H}_\mu$.  A classic result of Magidor shows that if $\kappa$ is supercompact, then the 0-guessing sets are stationary in $\wp^*_\kappa(H_\lambda)$ for all $\lambda \ge \kappa$ (Magidor's lemma was phrased somewhat differently; see \cite{Cox_MaxDecon} for a quick proof).

\begin{lemma}\label{lem_IntersectExt}
If $R$ is a ring of size $<\kappa$, $A$ and $B$ are $R$-modules, and
\[
\{ R,A,B \} \subset \mathfrak{N} \prec \mathfrak{H}_\lambda
\]
is such that $\mathfrak{N}$ is a 0-guessing model with $\mathfrak{N} \cap \kappa \in \kappa$, then:
\begin{enumerate}[label=(\roman*)]
 \item\label{item_Ext} For all $n \in \mathbb{N}$:
\[
\mathfrak{N} \cap \text{Ext}^n_R(A,B) \ \simeq \ \text{Ext}^n\Big( \mathfrak{N} \cap A, \mathfrak{N} \cap B  \Big).
\]

\noindent In particular, 
\[
\text{Ext}^n_R(A,B) = 0 \ \text{ if and only if } \ \text{Ext}^n_R\big( \mathfrak{N} \cap A, \mathfrak{N} \cap B \big) = 0.
\]

 \item\label{item_Lifts} Every homomorphism from $\mathfrak{N} \cap A \to \mathfrak{N} \cap B$ lifts to a homomorphism from $A \to B$.  In particular, if $B$ is also a \emph{subset} of $\mathfrak{N}$, every homomorphism from $\mathfrak{N} \cap A \to B$ lifts to a homomorphism from $A \to B$.
\end{enumerate}
\end{lemma}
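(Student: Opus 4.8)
The plan is to push every relevant $\mathrm{Ext}$- and $\mathrm{Hom}$-computation down through the transitive collapse of $\mathfrak{N}$ and back up, exploiting that $0$-guessing makes the collapse ``nice'' and that $\mathrm{Ext}$ is absolute for such collapses (Lemma~\ref{lem_Absoluteness}). Write $\pi_{\mathfrak{N}}:=\sigma_{\mathfrak{N}}^{-1}$ for the collapsing map and $\mu$ for the ordertype of $\mathfrak{N}\cap\lambda$. Since $|\mathfrak{N}|<\kappa$ and $\kappa$ is regular, $\mu<\kappa$; by the definition of $0$-guessing, $\mu$ is a cardinal with $\mathfrak{H}(\mathfrak{N})=\mathfrak{H}_\mu$, and $\mu$ is uncountable (indeed $\omega_1\subseteq\mathfrak{N}$). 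Consequently Lemma~\ref{lem_Absoluteness}(ii) applies to $\mathfrak{H}_\mu=\mathfrak{H}(\mathfrak{N})$ just as it does to $\mathfrak{H}_\lambda$: both structures compute $\mathrm{Ext}^n_R$ correctly. Throughout I will suppress the ring isomorphism $\sigma_{\mathfrak{N}}\restriction R_{\mathfrak{N}}\colon R_{\mathfrak{N}}\to R$ and write $R$ for $R_{\mathfrak{N}}$ — legitimate since $R\subseteq\mathfrak{N}$ — and I will use freely, from Fact~\ref{fact_ElemSubmodule}, that $\sigma_{\mathfrak{N}}\restriction A_{\mathfrak{N}}$ and $\sigma_{\mathfrak{N}}\restriction B_{\mathfrak{N}}$ are $R$-module isomorphisms onto $\mathfrak{N}\cap A$ and $\mathfrak{N}\cap B$ respectively.

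For part~\ref{item_Ext}, I would fix $n$ and set $E:=\mathrm{Ext}^n_R(A,B)$. Since $E$ is definable in $\mathfrak{H}_\lambda$ from $R,A,B,n\in\mathfrak{N}$, it lies in $\mathfrak{N}$, and by Lemma~\ref{lem_Absoluteness}(ii) it is the true $\mathrm{Ext}$-group. Applying $\pi_{\mathfrak{N}}$ and the elementarity of $\sigma_{\mathfrak{N}}$, the collapse $E_{\mathfrak{N}}$ is ``$\mathrm{Ext}^n_R(A_{\mathfrak{N}},B_{\mathfrak{N}})$ as computed in $\mathfrak{H}(\mathfrak{N})$'', which by Lemma~\ref{lem_Absoluteness}(ii) again is the true $\mathrm{Ext}^n_R(A_{\mathfrak{N}},B_{\mathfrak{N}})$. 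Meanwhile $\sigma_{\mathfrak{N}}$, being elementary and injective, restricts to a group isomorphism from $E_{\mathfrak{N}}$ onto its pointwise image $\mathfrak{N}\cap E$, and functoriality of $\mathrm{Ext}$ applied to the two module isomorphisms above yields $\mathrm{Ext}^n_R(A_{\mathfrak{N}},B_{\mathfrak{N}})\cong\mathrm{Ext}^n_R(\mathfrak{N}\cap A,\mathfrak{N}\cap B)$. Concatenating,
\[
\mathfrak{N}\cap\mathrm{Ext}^n_R(A,B)\ \cong\ E_{\mathfrak{N}}\ =\ \mathrm{Ext}^n_R(A_{\mathfrak{N}},B_{\mathfrak{N}})\ \cong\ \mathrm{Ext}^n_R(\mathfrak{N}\cap A,\mathfrak{N}\cap B),
\]
which is the asserted isomorphism. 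For the ``in particular'' clause, I would note that $E=0$ iff $\mathfrak{N}\cap E=\{0\}$ — if $E\neq 0$ then, by elementarity, $\mathfrak{N}$ contains a nonzero element of $E$ — and then invoke the isomorphism just established.

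For part~\ref{item_Lifts}, given a homomorphism $\phi\colon\mathfrak{N}\cap A\to\mathfrak{N}\cap B$, I would form
\[
\bar\phi\ :=\ \bigl(\sigma_{\mathfrak{N}}\restriction B_{\mathfrak{N}}\bigr)^{-1}\circ\phi\circ\bigl(\sigma_{\mathfrak{N}}\restriction A_{\mathfrak{N}}\bigr)\ \colon\ A_{\mathfrak{N}}\longrightarrow B_{\mathfrak{N}},
\]
an $R$-module homomorphism. Because $|\bar\phi|$ and the transitive closures of $A_{\mathfrak{N}},B_{\mathfrak{N}}$ all have size $<\mu$, so does $\mathrm{trcl}(\bar\phi)$, whence $\bar\phi\in H_\mu=\mathfrak{H}(\mathfrak{N})$; and by Lemma~\ref{lem_Absoluteness}\ref{item_TransHom}, ``$\bar\phi$ is an $R$-module homomorphism from $A_{\mathfrak{N}}$ to $B_{\mathfrak{N}}$'' holds in $\mathfrak{H}(\mathfrak{N})$. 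Applying $\sigma_{\mathfrak{N}}$ and elementarity, $\Phi:=\sigma_{\mathfrak{N}}(\bar\phi)$ is an $R$-module homomorphism from $\sigma_{\mathfrak{N}}(A_{\mathfrak{N}})=A$ to $\sigma_{\mathfrak{N}}(B_{\mathfrak{N}})=B$. Finally, for $a\in\mathfrak{N}\cap A$ one has $\Phi,a\in\mathfrak{N}$, so $\pi_{\mathfrak{N}}(\Phi(a))=\bar\phi(a_{\mathfrak{N}})$ since $\pi_{\mathfrak{N}}$ is an $\in$-isomorphism, and applying $\sigma_{\mathfrak{N}}$ once more gives $\Phi(a)=\sigma_{\mathfrak{N}}\bigl(\bar\phi(a_{\mathfrak{N}})\bigr)=\phi(a)$; thus $\Phi$ lifts $\phi$. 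The last sentence of part~\ref{item_Lifts} is then the special case $B\subseteq\mathfrak{N}$, in which $\mathfrak{N}\cap B=B$.

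The only place the $0$-guessing hypothesis is truly essential is the transfer carried out in the two preceding paragraphs: it is what forces $\mathfrak{H}(\mathfrak{N})$ to be literally some $\mathfrak{H}_\mu$, so that the ``$\mathrm{Ext}$ is computed correctly'' content of Lemma~\ref{lem_Absoluteness} is available downstairs as well as upstairs (for an arbitrary $\mathfrak{N}\in\wp^*_\kappa(H_\lambda)$ the collapse is merely some transitive model of a fragment of $\mathrm{ZFC}$ over which we have no $\mathrm{Ext}$-control). Everything else is bookkeeping; the one point to keep straight is that $\sigma_{\mathfrak{N}}$ sends the \emph{set} $A_{\mathfrak{N}}$ to the set $A$ but sends it \emph{pointwise} onto $\mathfrak{N}\cap A$, which is exactly why in part~\ref{item_Lifts} the lift $\Phi=\sigma_{\mathfrak{N}}(\bar\phi)$ is defined on all of $A$ and yet agrees with $\phi$ on $\mathfrak{N}\cap A$.
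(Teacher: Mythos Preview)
Your proof is correct and follows essentially the same approach as the paper's: collapse to $\mathfrak{H}(\mathfrak{N})=\mathfrak{H}_\mu$, invoke Lemma~\ref{lem_Absoluteness} for correctness of $\mathrm{Ext}$ there, and use elementarity of $\sigma_{\mathfrak{N}}$ to move back and forth; for part~\ref{item_Lifts} your $\bar\phi$ defined by conjugation is exactly the paper's pointwise preimage $\sigma_{\mathfrak{N}}^{-1}[\phi]$. One tiny quibble: your parenthetical ``indeed $\omega_1\subseteq\mathfrak{N}$'' is not guaranteed by the stated hypotheses alone, but the conclusion that $\mu$ is uncountable follows directly from $0$-guessing anyway (since $\omega\in\mathfrak{H}(\mathfrak{N})=\mathfrak{H}_\mu$ forces $\mu>\omega$).
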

\begin{proof}
Since $|R|<\kappa$, we will without loss of generality assume $R \in H_\kappa$.  Since $R \in \mathfrak{N} \cap H_\kappa$ and $\mathfrak{N} \cap \kappa$ is transitive, it follows that $R \subset \mathfrak{N}$ and $R$ is not moved by the transitive collapsing map of $\mathfrak{N}$.  Also, since $R \subset \mathfrak{N}$, $\mathfrak{N} \cap M$ is an $R$-submodule of $M$ for all $R$-modules $M \in \mathfrak{N}$.

Let $G^n:= \text{Ext}^n_R(A,B)$. As discussed above, $\sigma:=\sigma_{\mathfrak{N}}: \mathfrak{H}(\mathfrak{N}) \to \mathfrak{H}_\lambda$ denotes the inverse of the Mostowski collapsing map.  Since $A$ and $B$ are in $\mathfrak{N}$, it follows that $G^n \in \mathfrak{N}$ for all $n \in \mathbb{N}$, and these objects are all in the range of $\sigma$.  Recall that for $b \in \mathfrak{N}$---i.e., for $b$ in the range of $\sigma$---we use $b_{\mathfrak{N}}$ to denote $\sigma^{-1}(b)$.  Then by Fact \ref{fact_ElemSubmodule},
\begin{equation}\label{eq_3_isos}
R_{\mathfrak{N}}=R, \ A_{\mathfrak{N}} \simeq \mathfrak{N} \cap A, \  B_{\mathfrak{N}} \simeq \mathfrak{N} \cap B, \ \text{ and } G^n_{\mathfrak{N}} \simeq \mathfrak{N} \cap G^n.
\end{equation}

By elementarity of $\sigma$, $\mathfrak{H}(\mathfrak{N}) \models$ ``$G^n_{\mathfrak{N}} = \text{Ext}^n_{R_{\mathfrak{N}}}(A_{\mathfrak{N}},B_{\mathfrak{N}})$".  Since $\mathfrak{N}$ is 0-guessing, $\mathfrak{H}(\mathfrak{N})$ is of the form $\mathfrak{H}_\mu$, so by Lemma \ref{lem_Absoluteness}, $\mathfrak{H}(\mathfrak{N})=\mathfrak{H}_\mu$ is correct about $\text{Ext}^n_{R_{\mathfrak{N}}}$ for all $n \in \mathbb{N}$.  In particular, $G^n_{\mathfrak{N}}$ really is $\text{Ext}^n_{R_{\mathfrak{N}}}(A_{\mathfrak{N}},B_{\mathfrak{N}})$.  This, together with \eqref{eq_3_isos}, proves part \ref{item_Ext}.

For part \ref{item_Lifts}, suppose $\phi: \mathfrak{N} \cap A \to \mathfrak{N} \cap B$ is a homomorphism, and let $\overline{\phi}:= \sigma^{-1}[\phi]$ be the pointwise preimage of $\phi$ via $\sigma$.\footnote{I.e., $\overline{\phi} := \{ \sigma^{-1}(a,b) \ : \ (a,b) \in \phi \}$.}  Since the domain and range of $\phi$ are subsets of $\mathfrak{N}$, it follows that $\overline{\phi}$ is a total function from $A_{\mathfrak{N}} \to B_{\mathfrak{N}}$, and is in fact an $R=R_{\mathfrak{N}}$-module homomorphism.  Since $\overline{\phi}$ is a subset of $A_{\mathfrak{N}} \times B_{\mathfrak{N}}$, $A_{\mathfrak{N}} \times B_{\mathfrak{N}} \in \mathfrak{H}(\mathfrak{N})$, and $\mathfrak{H}(\mathfrak{N})=\mathfrak{H}_\mu$, it follows (by hereditary closure of $\mathfrak{H}_\mu$) that $\overline{\phi}$ is an \emph{element} of $\mathfrak{H}(\mathfrak{N})$.  And, by Lemma \ref{lem_Absoluteness}, ``$\overline{\phi}$ is an $R$-module homomorphism from $A_{\mathfrak{N}} \to B_{\mathfrak{N}}$" is downward absolute from the universe to the transitive set $\mathfrak{H}(\mathfrak{N})$.  Let $\widetilde{\phi}:= \sigma(\overline{\phi})$; by elementarity of $\sigma$, $\mathfrak{H}_\lambda \models$ ``$\widetilde{\phi}$ is an $R$-module homomorphism from $A \to B$", and this is upward absolute to the universe (again by Lemma \ref{lem_Absoluteness}).  And $\widetilde{\phi}$ extends $\phi$ because
\[
\widetilde{\phi} = \sigma\Big( \overline{\phi}  \Big) = \sigma \Big( \sigma^{-1}[\phi] \Big).
\]

\end{proof}

Now to prove Theorem \ref{thm_SC_Kaplansky}:  suppose $R$ is a ring, $\mathcal{S}$ is a set of $R$-modules, and $\kappa$ is a supercompact cardinal such that $R \in H_\kappa$ and $\bigcup \mathcal{S} \in H_\kappa$.  

We claim that ${}^\perp \mathcal{S}$ is a $<\kappa$-Kaplansky class.  Assume $M \in {}^\perp \mathcal{S}$, and $X \subset M$ is such that $|X|<\kappa$.  Fix a cardinal $\lambda \ge \kappa$ such that $M \in H_\lambda$.  By Magidor's lemma mentioned above, there is a 0-guessing $\mathfrak{N} \in \wp_\kappa^*(H_\lambda)$ such that $\mathfrak{N} \prec \mathfrak{H}_\lambda$, $X \in \mathfrak{N}$, and $\mathcal{S} \in \mathfrak{N}$.  By Fact \ref{fact_ElemSubmodule}, $X$ is also a subset of $\mathfrak{N}$.  So it will suffice to prove that $\mathfrak{N} \cap M$ and $\frac{M}{\mathfrak{N} \cap M}$ are both in ${}^\perp \mathcal{S}$.  

Since $\mathcal{S}$ is an element of $\mathfrak{N}$ and has size $<\kappa$, Fact \ref{item_Subset} ensures that $\mathcal{S} \subset \mathfrak{N}$; and then another application of Fact \ref{item_Subset} (this time using that $\bigcup \mathcal{S}$ is in $H_\kappa$) ensures that every $S \in \mathcal{S}$ is both an element \emph{and a subset} of $\mathfrak{N}$.  Consider any such $S$.  By part \ref{item_Ext} of Lemma \ref{lem_IntersectExt},
\begin{equation}
\text{Ext}\big( \mathfrak{N} \cap M, \mathfrak{N} \cap S \boldsymbol{= S} \big) =0, \ \text{ so } \mathfrak{N} \cap M \in {}^\perp S.
\end{equation} 

To show that $\frac{M}{\mathfrak{N} \cap M} \in {}^\perp S$, consider the short exact sequence
\[
\xymatrix{
0 \ar[r] & \mathfrak{N} \cap M \ar[r] & M \ar[r] & \frac{M}{\mathfrak{N} \cap M} \ar[r] & 0
}
\]
and the associated exact sequence
\begin{equation}\label{eq_LongExact}
\begin{gathered}
\xymatrix{
\text{Hom}\big( \mathfrak{N} \cap M, S \big) \ar[drr] & \text{Hom}\big( M,S \big) \ar[l] & \text{Hom}\left(\frac{M}{\mathfrak{N} \cap M}, S \right) \ar[l] \\
\text{Ext}\big( \mathfrak{N} \cap M, S \big) & \text{Ext}\big( M,S \big)=0 \ar[l] & \text{Ext}\left(\frac{M}{\mathfrak{N} \cap M}, S \right). \ar[l] 
}
\end{gathered}
\end{equation}
Observe that the diagonal map is surjective, since $\text{Ext}(M,S)=0$.  Furthermore, since $S$ is both an element and subset of $\mathfrak{N}$, part \ref{item_Lifts} of Lemma \ref{lem_IntersectExt} ensures that the restriction map $\text{Hom}(M,S) \to \text{Hom}(\mathfrak{N} \cap M, S)$ is also surjective.  Then by exactness, the lower right term in the diagram \eqref{eq_LongExact} must be 0.

\section{Proof of Theorem \ref{thm_MainVP_result}}\label{sec_VP}

\subsection{Proof of Theorem \ref{thm_VP_Main}}

$V$ denotes the universe of sets.  We will say $P$ is a \textbf{class relation} if $P$ is a definable subclass of $V^n$ for some (meta-mathematical) natural number $n$, possibly defined with some suppressed parameters.\footnote{I.e., $P$ is a class relation if $P \subseteq V^n$ for some  natural number $n$, and there is a formula $\phi(u_1,\dots,u_n, w_1,\dots,w_k)$ in the language of set theory, and parameters $p_1,\dots,p_k$, such that 
\[
P = \Big\{ (x_1,\dots,x_n) \ : \ \phi(x_1,\dots,x_n, p_1,\dots,p_k)  \Big\}.
\]
}  Recall from Section \ref{sec_Prelims} that for a (partially) elementary submodel $\mathfrak{N}$ of the universe of sets, if $b \in \mathfrak{N}$, then $b_{\mathfrak{N}}$ denotes the image of $b$ under the transitive collapse of $\mathfrak{N}$.  The following lemma is an immediate consequence of Corollary A.2 of Cox~\cite{Cox_MaxDecon}.

\begin{lemma}\label{lem_VP_Reflect}
Assume Vop\v{e}nka's Principle.  Let $P \subseteq V^{n}$ be an $n$-ary class relation.  Then there is a proper class of cardinals $\kappa$ (depending on $P$) with the following property: for every  \[
(a_1,\dots,a_n, r) \in V^n \ \times \ H_\kappa,
\]
there exists an $\mathfrak{N}$ such that $|\mathfrak{N}|<\kappa$, $\mathfrak{N} \cap \kappa \in \kappa$, $\{ a_1,\dots,a_k,r\} \subset \mathfrak{N} \prec_{\Sigma_1} (V,\in)$, and
\[
(a_1,\dots,a_n) \in P \ \iff \ \Big( (a_1)_{\mathfrak{N}},\dots,(a_k)_{\mathfrak{N}}\Big) \in P.
\]

\end{lemma}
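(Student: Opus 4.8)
The plan is to derive Lemma~\ref{lem_VP_Reflect} from the cited Corollary A.2 of Cox~\cite{Cox_MaxDecon} essentially as a matter of bookkeeping. The natural reading is that Corollary A.2 provides, under Vop\v{e}nka's Principle, a reflection principle of exactly this flavor, possibly stated for a single class relation $P$ or possibly stated in a slightly different but equivalent form (e.g.\ reflecting $\Sigma_n$-correctly rather than just for membership in $P$, or phrased in terms of embeddings $j \colon \mathfrak{H}(\mathfrak{N}) \to (V,\in)$ rather than in terms of collapses). So the first step is to state precisely what Corollary A.2 gives and to spell out the translation: given the class relation $P \subseteq V^n$ (defined by a formula $\phi$ with suppressed parameters $p_1,\dots,p_k$), feed the formula $\phi$ (together with those parameters) into Corollary A.2.

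The second step is to unwind the definitions so that the output of Corollary A.2 has the exact shape asserted here. Concretely, one wants: (i) a proper class of cardinals $\kappa$ depending only on $\phi$ and the parameters (hence ``depending on $P$''); (ii) for a given tuple $(a_1,\dots,a_n)$ and a given $r \in H_\kappa$, an elementary-enough submodel $\mathfrak{N}$ with $|\mathfrak{N}|<\kappa$, $\mathfrak{N}\cap\kappa\in\kappa$, and $\{a_1,\dots,a_n,r\}\subseteq\mathfrak{N}\prec_{\Sigma_1}(V,\in)$; and (iii) the correctness clause $(a_1,\dots,a_n)\in P \iff ((a_1)_{\mathfrak{N}},\dots,(a_n)_{\mathfrak{N}})\in P$. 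If Corollary A.2 is stated with $\Sigma_n$-elementarity for the $n$ with $\phi\in\Sigma_n$, then clause (iii) is immediate from that elementarity applied to $\phi$ and the transitive collapse (noting the parameters $p_j$ and $r$ are all in $\mathfrak{N}$, and that $\Sigma_1$-elementarity of $\mathfrak{H}_\lambda$ in $V$, as recorded in Section~\ref{sec_Prelims}, plus standard absoluteness lets one pass between $\mathfrak{H}(\mathfrak{N})$ and $V$); if instead it is stated directly as a ``$P$ reflects'' statement, then clause (iii) is verbatim what is given. Either way one should also note that requiring only $\prec_{\Sigma_1}$ in the conclusion is a weakening, so whatever degree of elementarity Corollary A.2 produces suffices.

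A minor wrinkle is the index mismatch in the displayed biconditional: the statement writes $((a_1)_{\mathfrak{N}},\dots,(a_k)_{\mathfrak{N}})$ and $\{a_1,\dots,a_k,r\}$ where one would expect $n$ rather than $k$. In the proof I would simply treat this as a typo, work with the tuple $(a_1,\dots,a_n)$ throughout, note that all finitely many relevant objects (the $a_i$, the parameters $p_j$ defining $P$, and $r$) can be placed into $\mathfrak{N}$ simultaneously since $\mathfrak{N}$ is an elementary submodel containing a fixed finite set, and appeal to Corollary A.2 with that finite parameter set.

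I do not expect a genuine obstacle here: the entire content is quoted as being an ``immediate consequence'' of an already-published result, so the work is purely in matching notation and verifying that the stated hypotheses (size $<\kappa$, $\mathfrak{N}\cap\kappa\in\kappa$, $\Sigma_1$-elementarity, the $r\in H_\kappa$ side condition) are all either literally in Corollary A.2 or follow by routine absoluteness. The one point that deserves a sentence of care is checking that the proper class of $\kappa$'s can be chosen uniformly in the tuple $(a_1,\dots,a_n)$ — i.e.\ that it depends only on $P$ (equivalently on $\phi$ and its parameters) and not on the particular point being reflected — since that uniformity is what makes the lemma usable downstream; but this is exactly the form in which such Vop\v{e}nka-style reflection lemmas are customarily stated, so it should come straight out of the citation.
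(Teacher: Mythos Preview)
Your proposal is correct and matches the paper's approach exactly: the paper offers no argument beyond the single sentence that the lemma ``is an immediate consequence of Corollary A.2 of Cox~\cite{Cox_MaxDecon},'' and your plan is precisely to invoke that corollary and translate notation. Your additional remarks on the $k$/$n$ typo and on verifying that the class of $\kappa$'s depends only on $P$ are reasonable elaborations of what the paper leaves implicit.
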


Note that, if $\mathfrak{N}$ is as in the conclusion of the lemma, then the transitive collapsing map of $\mathfrak{N}$ fixes $r$, since $r \in \mathfrak{N} \cap H_\kappa$ and $\mathfrak{N} \cap \kappa$ is transitive (Fact \ref{fact_ElemSubmodule}).  In the application below, the role of the $r$ will be played by the ring.  Also note that if $|\mathfrak{N}|<\kappa$, then the transitive collapse of $\mathfrak{N}$ is both an element and subset of $H_\kappa$.

\begin{remark}\label{rem_NonConcrete}
In concrete categories like $R$-Mod, if $M$ is an $R$-module and $R \cup \{ R,M\} \subset \mathfrak{N} \prec_{\Sigma_1} (V,\in)$, then we can talk about the $R$-module $\mathfrak{N} \cap M$, which is isomorphic to $M_{\mathfrak{N}}$.  Similarly, if $f: A \to B$ is an $R$-module homomorphism and $f \in \mathfrak{N}$, then we can make sense of 
\[
f \restriction \mathfrak{N}:  \mathfrak{N} \cap A \to \mathfrak{N} \cap B,
\]
which is isomorphic (in the arrow category) to $f_{\mathfrak{N}}$.

But the transitive collapsed versions make sense even for non-concrete categories, and under VP can be arranged to be legitimate objects and morphisms from that category.  For example, suppose $\{a,f\} \subset \mathfrak{N} \prec_{\Sigma_1} (V,\in)$ where $a$ is an object and $f$ is a morphism in the possibly non-concrete category $\mathcal{C}$.  Then $a_{\mathfrak{N}}$ and $f_{\mathfrak{N}}$ are always defined (simply as the image of $a$, $f$ under the transitive collapse map of $\mathfrak{N}$, respectively).  And, if $\mathfrak{N}$ correctly reflects the definition of the category---as is the case, for example, if the $P$ from Lemma \ref{lem_VP_Reflect} specifies the objects and morphisms of $\mathcal{C}$, and $\mathfrak{N}$ is as in the conclusion of that lemma---then $a_{\mathfrak{N}}$ and $f_{\mathfrak{N}}$ will be in the category $\mathcal{C}$ as well (and have any other properties that $a$ and $f$ had that were specified by $P$).
\end{remark}

We now prove Theorem \ref{thm_VP_Main}.  Assume Vop\v{e}nka's Principle, and suppose $R$ is a ring and $\mathcal{B}$ is a class of $R$-modules such that ${}^\perp B$ is downward closed under elementary submodules for every $B \in \mathcal{B}$.  Let $P$ be the 2-ary class relation
\[
P:= \big\{ (M,B) \ : \ M,B \in  R\text{-Mod}, \  B \in \mathcal{B}, \text{ and } M \notin {}^\perp B  \big\}.
\]
By Lemma \ref{lem_VP_Reflect}, there is a $\kappa$ such that $R \in H_\kappa$ and $\kappa$ has the required properties listed in the lemma with respect to $P$.  Let $\mathcal{B}^{<\kappa}:= \{ B \in \mathcal{B} \ : \ |B|<\kappa \}$.  We claim that 
\begin{equation}\label{eq_MainEqualityB}
{}^\perp \mathcal{B} = {}^\perp \big( \mathcal{B}^{<\kappa} \big),
\end{equation}
which will complete the proof since $\mathcal{B}^{<\kappa}$ has a set of representatives.  The $\subseteq$ direction of \eqref{eq_MainEqualityB} is trivial.  To see the $\supseteq$ direction, suppose $M$ is an $R$-module and $M \notin {}^\perp \mathcal{B}$; then $\text{Ext}(M,B) \ne 0$ for some $B \in \mathcal{B}$.  So 
\[
(M,B) \in P.
\]
By the property of $\kappa$, there is an $\mathfrak{N}$ of size $<\kappa$ such that $\{M,B,R\} \subset \mathfrak{N} \prec_{\Sigma_1} (V,\in)$, $\mathfrak{N} \cap \kappa \in \kappa$, and $\Big( M_{\mathfrak{N}}, B_{\mathfrak{N}} \Big) \in P$; so
\begin{equation}\label{eq_NcapBinB}
B_{\mathfrak{N}} \in \mathcal{B} 
\end{equation}
and
\begin{equation}\label{eq_NonZero}
M_{\mathfrak{N}} \notin {}^\perp \left(B_{\mathfrak{N}}\right).
\end{equation}

\noindent (Viewed another way---assuming without loss of generality that $\mathcal{B}$ is closed under isomorphisms---this just says that $\mathfrak{N} \cap B \in \mathcal{B}$ and $\mathfrak{N} \cap M \notin {}^\perp ( \mathfrak{N} \cap B )$).

We claim that $M \notin {}^\perp \left( B_{\mathfrak{N}} \right)$; this will finish the proof, since $B_{\mathfrak{N}}$ is a $<\kappa$-sized element of $\mathcal{B}$.  Suppose toward a contradiction that $M \in {}^\perp \left( B_{\mathfrak{N}} \right)$.  Since $R = R_{\mathfrak{N}}$ (so in particular $R \subset \mathfrak{N}$), Fact \ref{fact_ElemSubmodule} ensures that $M_{\mathfrak{N}}$ is an $R=R_{\mathfrak{N}}$-module and
\[
\sigma_{\mathfrak{N}} \restriction M_{\mathfrak{N}}: M_{\mathfrak{N}} \to M
\]
is an elementary embedding in the language of $R$-modules, so $M_{\mathfrak{N}}$ is isomorphic to an elementary submodule of $M$.  Since $M \in {}^\perp \left(B_{\mathfrak{N}} \right)$ and ${}^\perp \left(B_{\mathfrak{N}} \right)$ is downward closed under elementary submodules, it follows that $M_{\mathfrak{N}} \in {}^\perp \left(B_{\mathfrak{N}} \right)$.  But this contradicts \eqref{eq_NonZero}.  This completes the proof of Theorem \ref{thm_VP_Main}.

\subsection{Proof of Theorem \ref{thm_MainVP_result}}

Assume VP is consistent.  By the result of Appendix \ref{app_VPDIAMOND}, VP is also consistent with \eqref{eq_DiamondEverywhereIntro}, so we may assume that both VP and \eqref{eq_DiamondEverywhereIntro} hold.  Suppose $(\mathcal{A},\mathcal{B})$ is a cotorsion pair such that ${}^\perp B$ is downward closed under elementary submodules for all $B \in \mathcal{B}$.  By Theorem \ref{thm_VP_Main}, there is a set $\mathcal{B}_0 \subseteq \mathcal{B}$ such that
\[
{}^\perp \mathcal{B} = {}^\perp \mathcal{B}_0.
\]
So the cotorsion pair
\[
(\mathcal{A},\mathcal{B})=\big( {}^\perp \mathcal{B}, \mathcal{B} \big) = \big( {}^\perp \mathcal{B}_0,\mathcal{B} \big)
\]
is cogenerated by the set $\mathcal{B}_0$, and ${}^\perp B$ is downward closed under elementary submodules for all $B \in \mathcal{B}_0$.  It follows that ${}^\perp \mathcal{B}_0$ is downward closed under elementary submodules.  By \eqref{eq_DiamondEverywhereIntro} and the \v{S}aroch-Trlifaj Theorem \ref{thm_SarochTrlifaj}, the cotorsion pair $\big( {}^\perp \mathcal{B}_0,\mathcal{B} \big)$---which is cogenerated by the set $\mathcal{B}_0$---is also generated by a set, and hence complete.

\section{Generalizations and open problems}\label{sec_GenOpen}

Keeping Lemma \ref{lem_VP_Reflect} and Remark \ref{rem_NonConcrete} in mind, Theorem \ref{thm_VP_Main} can be easily generalized to any category where the $\text{Ext}$ functor make sense, so long as for every $b \in \mathcal{B}$, the class ${}^\perp b$ is closed under the $(-)_{\mathfrak{N}}$ operation for sufficiently many $\mathfrak{N} \prec_{\Sigma_1}(V,\in)$.

We end with some questions:
\begin{question}[Trlifaj, personal correspondence]\label{q_Trlifaj}
Is there (a ZFC-provable) example of a ring and a cotorsion pair of modules over that ring that is \textbf{not} generated by a set?\footnote{Or, equivalently, whose left coordinate is not \emph{deconstructible}.} 
\end{question}

\begin{question}[General Salce Problem]\label{q_GeneralSalceProblem}
Is there (a ZFC-provable) example of a ring and a cotorsion pair of modules over that ring that is \textbf{not} complete?\footnote{Equivalently, whose left coordinate is not a special precovering class?} 
\end{question}

By Theorem \ref{thm_MainVP_result}, an affirmative answer to either question would require the ring to \emph{not} be (ZFC-provably) hereditary.  And by Theorem \ref{thm_SC_Kaplansky}, an affirmative answer to either question could \emph{not} be a (ZFC-provably) cotorsion pair of the form 
\[
\Big( {}^\perp \mathcal{S}, \big( {}^\perp \mathcal{S} \big)^\perp \Big)
\]
where $\mathcal{S}$ is a set and ${}^\perp \mathcal{S}$ is closed under direct limits.

We showed that consistency of the large cardinal principle VP implies the consistency of an affirmative solution to Salce's question.  This raises:
\begin{question}
Does the (scheme) ``all cotorsion pairs in the category of abelian groups are complete" carry large cardinal consistency strength?  Does the conclusion of Theorem \ref{thm_VP_Main} carry large cardinal consistency strength?
\end{question}

\appendix

\section{Consistency of Vop\v{e}nka's Principle with Diamond on all stationary sets}\label{app_VPDIAMOND}

For a regular uncountable $\lambda$ and a stationary $S \subseteq \lambda$, Jensen's $\Diamond_\lambda(S)$ principle asserts that there is a sequence
\[
\langle X_\alpha \ : \ \alpha \in S \rangle
\]
such that $X_\alpha \subseteq \alpha$ for all $\alpha \in S$, and for every $X \subseteq \lambda$, the set
\[
\{ \alpha  \in S \ :  \ X \cap \alpha = X_\alpha   \}
\]
is stationary.  Starting with a model of Vop\v{e}nka's Principle (VP), we briefly sketch how to produce a model of VP that also satisfies
\begin{equation}\label{eq_ForceDiamondAtKappa}
\forall \lambda \in \text{REG} \cap [\omega_1,\infty) \ \forall S \subseteq \lambda \ \ S \text{ stationary } \implies \ \Diamond_\lambda(S).
\end{equation}
It is a folklore fact that if $\lambda$ is regular and uncountable, then there is a $<\lambda$-directed closed poset $\mathbb{D}_\lambda$ forcing ``$\Diamond_\lambda(S)$ holds for all stationary $S \subseteq \lambda$"; and if $\lambda^{<\lambda}=\lambda$, then $\mathbb{D}_\lambda$ also has the $\lambda^+$-chain condition ($\mathbb{D}_{<\lambda}$ is just a $<\lambda$-support product of adding Cohen subsets of $\lambda$; in fact, adding a \emph{single} Cohen subset of $\lambda$ suffices, see \cite{125308}).  

Assume VP holds.  By Corollary 26 of Brooke-Taylor~\cite{MR2805294}, we may without loss of generality assume GCH holds as well.  Define an Easton support iteration
\[
\langle \mathbb{P}_\alpha,\dot{Q}_\alpha \ : \ \alpha < \text{ORD} \rangle
\]
where $\mathbb{P}_\alpha$ forces ``if $\alpha$ is regular and uncountable, $\dot{Q}_\alpha = \dot{\mathbb{D}}_\alpha$; otherwise $\dot{Q}_\alpha$ is the trivial forcing".  Let $\mathbb{P}$ be the (class-sized) direct limit of this iteration.  By Theorem 25 of Brooke-Taylor~\cite{MR2805294}, $\mathbb{P}$ preserves VP.  So we just need to show that $\mathbb{P}$ forces \eqref{eq_ForceDiamondAtKappa}.  Suppose $\lambda$ is regular in the extension; then Diamond holds at all stationary subsets of $\lambda$ in $V^{\mathbb{P}_{\lambda+1}}$ by design, since the poset used at stage $\lambda$ was $\mathbb{D}_\lambda^{V_{\mathbb{P}_\lambda}}$.  Note also that $\mathbb{P}_{\lambda+1}=\mathbb{P}_{\lambda^+}$ because $\dot{Q}_\alpha$ is trivial for $\alpha \in (\lambda,\lambda^+)$.  Since ``$\Diamond_\lambda(S)$ holds for all stationary $S \subseteq \lambda$" is clearly preserved by forcings that add no new subsets of $\lambda$, it suffices to show that $\mathbb{P}_{\lambda+1}=\mathbb{P}_{\lambda^+}$ forces the tail of the iteration to be $<\lambda^+$-directed closed.  Now the GCH assumption, regularity of $\lambda$, and the fact that it is an Easton support iteration ensure that $|\mathbb{P}_\lambda| \le \lambda$, so $\mathbb{P}_\lambda$ is (at worst) $\lambda^+$-cc.  Hence, since $\dot{\mathbb{D}}_\lambda$ is also forced to be $\lambda^+$-cc, $\mathbb{P}_{\lambda^+}=\mathbb{P}_{\lambda+1}=\mathbb{P}_\lambda * \dot{\mathbb{D}}_\lambda$ is also $\lambda^+$-cc.  Then the assumptions of Proposition 7.12 of Cummings~\cite{MR2768691} are satisfied (with his $\beta$ and $\kappa$ both interpreted as our $\lambda^+$), which guarantees that $\mathbb{P}_{\lambda^+}$ forces the tail of the iteration to be $<\lambda^+$-directed closed.

\begin{bibdiv}
\begin{biblist}
\bibselect{../../../MasterBibliography/Bibliography}
\end{biblist}
\end{bibdiv}

\end{document}